\newtheorem{thm}{Theorem}
\newtheorem{prop}[thm]{Proposition}
\newtheorem{lem}[thm]{Lemma}
\newtheorem{claim}[thm]{Claim}
\newenvironment{proof}{\textit{Proof.}}{\hfill $\Box$ \\}
\newcommand\blfootnote[1]{%
  \begingroup
  \renewcommand\thefootnote{}\footnote{#1}%
  \addtocounter{footnote}{-1}%
  \endgroup
}
\begin{document}

\title{On the bend number of circular-arc graphs as edge intersection graphs of paths on a grid}

\author[a]{Liliana Alc\'on}
\author[b,h]{Flavia Bonomo}
\author[c,d,h]{Guillermo Dur\'an}
\author[a,h]{Marisa Gutierrez}
\author[a,h]{Mar\'\i a P\'\i a Mazzoleni}
\author[e,f]{Bernard Ries}
\author[g]{Mario Valencia-Pabon}

\affil[a]{Dto. de Matem\'atica, FCE-UNLP, La Plata, Argentina}
\affil[b]{Dto. de Computaci\'on FCEN-UBA, Buenos Aires, Argentina}
\affil[c]{Dto. de Matem\'atica e Inst. de C\'alculo FCEN-UBA,
Buenos Aires, Argentina}
\affil[d]{Dto. de Ingenier\'\i a Industrial, FCFM-Univ. de Chile,
Santiago, Chile}
\affil[e]{PSL, Universit\'e Paris-Dauphine, Paris, France}
\affil[f]{CNRS, LAMSADE UMR 7243, Paris, France}
\affil[g]{Universit\'e Paris-13, Sorbonne Paris Cit\'e LIPN, CNRS
UMR7030, Villetaneuse, France. Currently in D\'el\'egation at the
INRIA Nancy - Grand Est, France}
\affil[h]{CONICET}

\date{\textbf{Dedicated to Martin Charles Golumbic \\
on the occasion of his 65th birthday}} \maketitle

\vspace{-1cm}

\blfootnote{E-mail addresses: liliana@mate.unlp.edu.ar;
fbonomo@dc.uba.ar;
 gduran@dm.uba.ar; marisa@mate.unlp.edu.ar; pia@mate.unlp.edu.ar; bernard.ries@dauphine.fr; valencia@lipn.univ-paris13.fr. \\
 This work was partially supported by MathAmSud Project 13MATH-07
(Argentina--Brazil--Chile--France), UBACyT Grant 20020130100808BA,
CONICET PIP 122-01001-00310, 112-200901-00178 and
112-201201-00450CO and ANPCyT PICT 2010-1970 and 2012-1324
(Argentina), FONDECyT Grant 1140787 and Millennium Science
Institute ``Complex Engineering Systems'' (Chile).}


\begin{abstract}
Golumbic, Lipshteyn and Stern \cite{Golumbic-epg} proved that every graph can be represented as the edge intersection graph of paths on a grid (EPG graph), i.e., one can associate with each vertex of the graph a nontrivial path on a rectangular grid such that two vertices are adjacent if and only if the corresponding paths share at least one edge of the grid. For a nonnegative integer $k$, $B_k$-EPG graphs are defined as EPG graphs admitting a model in which each path has at most $k$ bends. Circular-arc graphs are intersection graphs of open arcs of a circle. It is easy to see that every circular-arc graph is a $B_4$-EPG graph, by embedding the circle into a rectangle of the grid. In this paper, we prove that every circular-arc graph is $B_3$-EPG, and that there exist circular-arc graphs which are not $B_2$-EPG. If we restrict ourselves to rectangular representations (i.e., the union of the paths used in the model is contained in a rectangle of the grid), we obtain EPR (edge intersection of path in a rectangle) representations. We may define $B_k$-EPR graphs, $k\geq 0$, the same way as $B_k$-EPG graphs. Circular-arc graphs are clearly $B_4$-EPR graphs and we will show that there exist circular-arc graphs that are not $B_3$-EPR graphs. We also show that normal circular-arc graphs are $B_2$-EPR graphs and that there exist normal circular-arc graphs that are not $B_1$-EPR graphs. Finally, we characterize $B_1$-EPR graphs by a family of minimal forbidden induced subgraphs, and show that they form a subclass of normal Helly circular-arc graphs.\\

\noindent \textbf{Keywords.} edge intersection graphs, paths on a grid, forbidden induced subgraphs, (normal, Helly) circular-arc graphs, powers of cycles.
\end{abstract}


\section{Introduction}

Let $\mathcal{G}$ be a rectangular grid of size $(\ell+1) \times (\ell+1)$. The horizontal grid lines will be referred to as \textit{rows} and denoted by $x_0,x_1,\ldots,x_{\ell}$, and the vertical grid lines will be referred to as \textit{columns} and denoted by $y_0,y_1,\ldots,y_{\ell}$. A grid point lying on row $x$ and column $y$ is referred to as $(x,y)$. Let $\cal P$ be a collection of nontrivial simple paths on $\cal G$. The edge intersection graph of $\cal P$ (denoted by EPG($\mathcal{P}$)) is the graph whose vertices correspond to the paths of $\cal P$ and two vertices are adjacent in EPG($\cal P$) if and only if the corresponding paths in $\cal P$ share at least one edge in $\cal G$. A graph $G$ is called an \textit{edge intersection graph of paths on a grid (EPG graph)} if $G$=EPG($\cal P$) for some $\cal P$. Every graph $G$ satisfies $G$=EPG($\cal P$) for some $\cal P$ on a large enough grid and allowing an arbitrary number of \textit{bends} (turns on a grid point) for each path \cite{Golumbic-epg}. In recent years, the subclasses for which the number of bends of each path is bounded by some integer $k\geq 0$, known as \textit{$B_k$-EPG graphs}, were widely studied \cite{Bernard-epg,Suk-epg,Biedl-epg,Epstein-epg,Golumbic-epg,Knau-epg-npc,Knau-epg-planar}. The \textit{bend number} of a graph $G$ (resp. a graph class $\mathcal{H}$), is the smallest integer $k\geq 0$ such that $G$ (resp. every graph in $\mathcal{H}$) is a $B_k$-EPG graph. We denote by $B_k$-EPG, $k\geq 0$, the class of $B_k$-EPG graphs.

In \cite{Knau-epg-npc}, it was shown that for every integer $k\geq 0$ there exists a graph with bend number $k$, and that recognizing $B_1$-EPG graphs is NP-complete~. The bend number of classical graph classes was investigated as well. In~\cite{Knau-epg-planar}, it was shown that outerplanar graphs are $B_2$-EPG graphs and that planar graphs are $B_4$-EPG graphs. For planar graphs, it is still an open question whether their bend number is equal to 3 or 4. On the other hand, is easy to see that $B_0$-EPG graphs exactly correspond to interval graphs (i.e., intersection graphs of intervals on a line) \cite{Golumbic-epg}. A generalization of interval graphs are circular-arc graphs (i.e., intersection graphs of open arcs on a circle). It is natural to see circular-arc graphs as EPG graphs by identifying the circle with a rectangle of the grid. Hence, circular-arc graphs form a subclass of $B_4$-EPG graphs. This leads to some natural questions. For example, the bend number of circular-arc graphs or the characterization of circular-arc graphs that are $B_k$-EPG graphs, for some $k<4$. One of the main results of this paper is that the bend number of circular-arc graphs is 3.

Another interesting question is how many bends per path are needed
for a circular-arc graph to be represented in a rectangle of the
grid, i.e., in such a way that the union of the paths is contained
in a rectangle of the grid. We call such graphs \textit{edge
intersection graphs of paths on a rectangle} (\textit{EPR
graphs}). It is easy to see that EPR graphs are exactly the
circular-arc graphs. We will study the classes $B_k$-EPR, for
$0\leq k\leq 4$, in which the paths on the grid that represent the
vertices of the graph have at most $k$ bends. As before, we denote
by $B_k$-EPR, $k\geq 0$, the class of $B_k$-EPR graphs. Similar to
the case of EPG graphs, one can define for a circular-arc graph
$G$ the bend number with respect to an EPR representation as the
smallest integer $k$ such that $G$ is a $B_k$-EPR graph. Notice
that CA = EPR = $B_4$-EPR. We strengthen this observation by
showing that the bend number for circular-arc graphs with respect
to EPR representations is 4. Furthermore, we focus on $B_1$-EPR
graphs and $B_2$-EPR graphs ($B_0$-EPR graphs correspond again to
interval graphs), and relate these classes with the class of
normal Helly circular-arc graphs. In summary, we obtain the
following results: we prove that the bend number of normal
circular-arc graphs with respect to EPR representations is 2;
moreover, we characterize $B_1$-EPR graphs by a family of minimal
forbidden induced subgraphs, and show that they are exactly the
normal Helly circular-arc graphs containing no powers of cycles
$C_{4k-1}^k$, with $k \geq 2$, as induced subgraphs.

An extended abstract of a preliminary version of this work was published in the proceedings of LAGOS~2015~\cite{EPG-lagos15}.


\section{Preliminaries}
\label{s:preliminares}

All graphs that we consider in this paper are connected, finite and simple. For all graph theoretical terms and notations not defined here, we refer the reader to \cite{Bondy-Murty}.

We denote by $C_n$, $n\geq 3$, the chordless cycle on $n$ vertices. A graph is called \emph{chordal}, if it does not contain any chordless cycle of length at least four. Given a graph $G$ and an integer $k \geq 0$, the \emph{power graph} $G^k$ has the same vertex set as $G$, two vertices being adjacent in $G^k$ if their distance in $G$ is at most $k$.

Let $G=(V,E)$ be a graph and let $X\subseteq V$. We denote by $G-X$ the graph obtained from $G$ by deleting all vertices in $X$.

A \textit{clique} (resp. a \textit{stable set}) is a subset of vertices that are pairwise adjacent (resp. nonadjacent). A \emph{thick spider} $S_n$ is the graph whose $2n$ vertices can be partitioned into a clique $K=\{c_1,\dots,c_n\}$ and a stable set $S=\{s_1,\dots,s_n\}$ such that, for $1 \leq i,j, \leq n$, $c_i$ is adjacent to $s_j$ if and only if $i \neq j$. Notice that $S_{n_1}$ is an induced subgraph of $S_{n_2}$ if $n_1 \leq n_2$.

We say that a vertex $v$ \emph{dominates} a vertex $w$ if they are adjacent and every neighbor of $w$ is also a neighbor of $v$.

Let $G$ be a circular-arc graph (\textit{CA graph}). If $\mathcal{C}$ denotes the corresponding circle and $\mathcal{A}$ the corresponding set of open arcs, then $(\mathcal{A},\mathcal{C})$ is called a \emph{circular-arc model} of $G$ \cite{Tuc-char}. A graph $G$ is a \emph{Helly circular-arc graph} (\textit{HCA graph}) if it is a circular-arc graph having a circular-arc model such that any subset of pairwise intersecting arcs has a common point on the circle \cite{Gav-circ-arc}. A circular-arc graph having a circular-arc model without two arcs covering the whole circle is called a \emph{normal circular-arc graph} (\textit{NCA graph}). Circular-arc models that are at the same time normal and Helly are precisely those without three or less arcs covering the whole circle. A graph that admits such a model is called a \emph{normal Helly circular-arc graph} (\textit{NHCA graph}) \cite{L-S-S-nhca}. We will denote by NCA (resp. NHCA) the class of normal (resp. normal Helly) circular-arc graphs.

In \cite{C-G-S-nhca}, the authors present a characterization of
NHCA graphs by a family of minimal forbidden induced subgraphs.
Recent surveys on circular-arc graphs are given in
\cite{D-G-S-survey,L-S-circ-arc-survey}. A very recent
characterization of circular-arc graphs by forbidden structures is
presented in \cite{Hell-circ-arc}.


\section{Circular-arc graphs as EPG graphs} \label{sub:epg}

In this section, we show that the bend number of circular-arc graphs with respect to EPG representations is equal to 3. We start with the following result.

\begin{thm}
\label{t:CA-B3-epg}
Every circular-arc graph is a $B_3$-EPG graph.
\end{thm}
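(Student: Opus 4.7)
The plan is to start from the natural $B_4$-EPR representation obtained by embedding the circle of a circular-arc model $(\mathcal{A},\mathcal{C})$ of $G$ as the boundary of a rectangle $R$ on the grid, and replacing every arc by the path it traces along the boundary. In this representation each path makes a bend exactly at each corner of $R$ strictly contained in the interior of the corresponding arc, so it has at most four bends. Our goal is therefore to modify the construction so that no path has four bends.

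The first step is to choose the four grid points that play the role of the corners of $R$ in a judicious way on $\mathcal{C}$. Since the number of bends of a path is the number of corners strictly inside the corresponding arc, what we want is a choice of four points of $\mathcal{C}$ piercing the complement of every arc of $\mathcal{A}$. If such a piercing quadruple exists, the construction immediately yields an EPG representation with at most three bends per path; this already handles every CA graph whose arcs do not cover too large a portion of the circle.

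When such a piercing quadruple does not exist, there will be a non-empty set $L$ of ``bad'' arcs, each containing all four corners of $R$; equivalently, the complement of each bad arc lies inside a single side of $R$. Any two bad arcs intersect (since each covers more than half the circle), and in fact the complements of the arcs of $L$ are pairwise disjoint sub-arcs of $\mathcal{C}$, so they are cyclically ordered around $R$. To handle them, I plan to add one extra row of the grid just outside one side of $R$ and to re-route each bad arc's path through the extra row, replacing the four corner-bends by three bends using a detour that starts or ends where one chosen corner used to be traversed. The intersections between the bad arc and its neighbours are restored by extending each affected neighbour by a single edge into the extra row, which adds at most one bend to the neighbour without bringing it past three.

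The main obstacle is the global coordination of the detours for the arcs in $L$ together with the extensions for their neighbours: we must verify that, throughout the cyclic ordering of the complements, these modifications can be scheduled without forcing any path to exceed three bends and that all edge intersections of the circular-arc model are correctly preserved in the grid representation. The pairwise disjointness of the complements of the bad arcs, together with the fact that they are naturally cyclically ordered along the boundary of $R$, is what should make this book-keeping argument go through.
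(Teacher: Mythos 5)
There is a genuine gap here: the entire difficulty of the theorem is concentrated in your fallback case, and that case is left as an unproved ``book-keeping argument'' whose supporting claims are not actually true in general. First, the set $L$ of bad arcs is not a rare degeneracy: in a thick spider $S_n$ the $n$ clique arcs have small, pairwise disjoint complements, so any choice of four corners leaves $n-4$ arcs containing all four corners. Second, your assertion that the complements of the arcs in $L$ are pairwise disjoint is false: two arcs whose (overlapping) complements sit inside the same side of $R$ both contain all four corners, yet their complements intersect, so there is no clean cyclic ordering to schedule the detours around. Third, the repair step ``extend each affected neighbour by a single edge into the extra row, which adds at most one bend without bringing it past three'' fails for any neighbour whose arc already contains three corners and hence already uses three bends; and nothing prevents two such extensions, belonging to non-adjacent vertices, from sharing an edge in the extra row and creating a false adjacency. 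Since you explicitly defer the global coordination of detours to an unverified argument, and the intermediate facts you would need for it do not hold, the proposal does not constitute a proof.

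The paper avoids all of this by never starting from the rectangular embedding. It cuts the circle at a single point $0$, observes that the arcs through $0$ form a clique $X$ while $G-X$ is an interval graph, and lays everything out on an L-shape (row $x_0$ and column $y_0$): each vertex of $G-X$ with arc $(a,b)$, $a<b$, gets a $3$-bend path occupying row $x_0$ from $y_a$ to $y_b$ and column $y_0$ from $x_a$ to $x_b$, and each vertex of $X$ gets a $3$-bend path that wraps around both ends of the L. Every intersection then happens on row $x_0$ or column $y_0$ and can be checked by a direct comparison of endpoints, with no case analysis on how many corners an arc contains. If you want to salvage your approach, the honest comparison is that choosing corners to pierce complements can only ever certify membership in $B_3$\textrm{-EPR} (which is in fact strictly weaker, since the paper shows $S_{13}\notin B_3$\textrm{-EPR}), so some departure from the rectangle, such as the paper's L-shaped unrolling, is unavoidable.
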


\begin{proof}
Let $G$ be a circular-arc graph and let $(\mathcal{A},\mathcal{C})$ be a circular-arc model of $G$. Without loss of generality, we may assume that the endpoints of the arcs are all distinct and we can number them clockwise in the circle from $1$ to $2n$ (with $n$ being the number of vertices of $G$). We also define a point $0$ in the circle between $2n$ and $1$ (clockwise). The arc $(a,b)$, $1 \leq a,b \leq 2n$, denotes the arc traversing the circle clockwise from endpoint $a$ to endpoint $b$. In particular, an arc $(a,b)$ contains point $0$ of $\mathcal{C}$ if and only if $a > b$. Let $X$ be the set of vertices in $G$ corresponding to arcs containing point $0$ of $\mathcal{C}$. Clearly, these vertices form a clique in $G$. Moreover, $G-X$ is an interval graph that can be represented on a line by taking, for each vertex, the interval $(a,b)$ defined by the endpoints of its corresponding arc, since $a < b$ for vertices in $G-X$. We will construct the following EPG representation of $G$ on a grid. For each vertex in $G-X$ corresponding to an arc $(a,b)$, assign the 3-bends path on the grid whose endpoints are $(x_0,y_b)$ and $(x_b,y_0)$ and whose bend points correspond to the grid points $(x_0,y_a),(x_a,y_a),(x_a,y_0)$. For each vertex of $X$ corresponding to an arc $(a,b)$ (in this case $a > b$), assign the 3-bends path on the grid whose endpoints are $(x_0,y_0)$ and $(x_{2n+1},y_0)$ and whose bend points correspond to the grid points $(x_0,y_b),(x_a,y_b),(x_a,y_0)$. Since all the endpoints of the arcs in $\mathcal{A}$ are different, the edge intersections of the paths are either on row $x_0$ or on column $y_0$ of the grid. Clearly, two paths corresponding to vertices of $G-X$ intersect if and only if the corresponding arcs intersect on $\mathcal{C}$. Two paths corresponding to vertices of $X$ intersect at least on the edge of the grid going from $(0,0)$ to $(0,1)$. The path corresponding to a vertex in $G-X$ with endpoints $(a,b)$ and the path corresponding to a vertex in $X$ with endpoints $(c,d)$ intersect if and only if either $d > a$ or $c < b$, and the same condition holds for the corresponding arcs in $\mathcal{C}$. Thus, we obtain a representation of $G$ as a $B_3$-EPG graph.
\end{proof}

Combining Theorem \ref{t:CA-B3-epg} with the following result
shows that the bend number of circular-arc graphs with respect to
EPG representations is 3.

\begin{prop}\label{p:s40}
The thick spider $S_{40}$ is in $B_3$-EPG $\setminus$ $B_2$-EPG.
\end{prop}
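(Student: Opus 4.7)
The plan is to prove the two inclusions separately. For $S_{40}\in B_3$-EPG, I would first observe that $S_{40}$ is a circular-arc graph and then invoke Theorem~\ref{t:CA-B3-epg}. A circular-arc model is straightforward to construct: place $40$ points $p_1,\dots,p_{40}$ in cyclic order on a circle, represent $s_i$ by a small arc around $p_i$, and represent $c_i$ by the complement in the circle of a small open neighborhood of $p_i$. Then $c_i$ meets $c_j$ for $i\neq j$ (each covers nearly the whole circle), $c_i$ contains $s_j$ for $j\neq i$, while $c_i$ and $s_i$ are disjoint and the $s_j$'s are pairwise disjoint, matching the edge set of $S_{40}$.

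For $S_{40}\notin B_2$-EPG, I would argue by contradiction, assuming a $B_2$-EPG representation $\{P_v : v\in V(S_{40})\}$ exists. A path with at most two bends has at most three axis-aligned segments and, up to translation and reflection, belongs to a constant-size list of shape types (a single segment, an \emph{L}-shape, a \emph{U}-shape, or a \emph{Z}-shape, together with their rotations). Since the $40$ clique paths $P_{c_1},\dots,P_{c_{40}}$ must pairwise share a grid edge, the first step is a pigeonhole/Ramsey reduction: from the $40$ clique paths and the bounded inventory of shapes, I would extract a large subfamily whose paths share a common geometric feature, for example all passing through a fixed grid edge, or all overlapping on a common horizontal or vertical stretch of consecutive grid edges.

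With such a structured subfamily of clique paths in hand, I would exploit the stable-set constraints: for each index $i$ in the subfamily, the path $P_{s_i}$ must share a grid edge with every $P_{c_j}$ with $j\neq i$ from the subfamily, yet remain edge-disjoint from $P_{c_i}$. The target is to show that a path with only three straight segments cannot realize this ``almost universal but one'' intersection pattern once the clique subfamily is symmetric and sufficiently large: two $s$-paths would then be forced to play the same geometric role, hence to miss the same clique path, contradicting that $s_i$ misses exactly $c_i$. The main obstacle is precisely this second step, where one must control how the at most three segments of $P_{s_i}$ can interact with the at most three segments of each $P_{c_j}$ in every reduced configuration; the concrete bound of $40$ presumably emerges as a product of pigeonhole factors chosen to be safely above the thresholds imposed by that case analysis.
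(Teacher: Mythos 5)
The first half of your proposal (membership in $B_3$-EPG via a circular-arc model of $S_{40}$ and Theorem~\ref{t:CA-B3-epg}) is correct and is exactly what the paper does. The second half, however, is a plan rather than a proof, and the plan has a gap at precisely the point you yourself flag as ``the main obstacle'': the case analysis showing that no $2$-bend path can realize the ``all but one'' intersection pattern is never carried out, and the bound $40$ is never actually derived. Moreover, your proposed first step is not justified: from $40$ pairwise edge-intersecting $2$-bend clique paths you cannot in general extract a large subfamily passing through a fixed grid edge or overlapping on a common stretch --- edge-intersection of grid paths has no Helly-type property, and the ``constant-size list of shapes'' does not control positions, so the pigeonhole you invoke has no finite set of classes to pigeonhole into. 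As stated, the reduction could fail outright.

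The paper's argument uses a different and sharper lever. Fix one clique vertex $c$: its path $\mathcal{P}_c$ lies on at most $3$ grid lines, so at least $\lceil 39/3\rceil=13$ of the paths of its stable-set neighbours meet it on a single line $x$. Those $13$ paths are pairwise edge-disjoint, so their subpaths on $x$ are linearly ordered; let $s_j$ correspond to the middle one. The clique path $\mathcal{P}_{c_j}$ must avoid $\mathcal{P}_{s_j}$, hence (having only two bends) cannot reach both sides of the middle subpath on $x$, so it misses at least $6$ of the $13$ on $x$ and must meet them elsewhere; at most $2$ of these meetings can occur on columns, forcing at least $4$ on a second row $x'$. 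Those $4$ stable-set paths then each use both rows $x,x'$, hence distinct columns, and keep the same left-to-right order on both rows; picking a middle one $s_k$, the path $\mathcal{P}_{c_k}$ cannot intersect the other three while avoiding $\mathcal{P}_{s_k}$ with only two bends. Note that the pigeonholing is applied to the stable-set neighbours of a \emph{single} clique vertex (using that its path spans only $3$ lines), not to the clique paths themselves; this is the idea your proposal is missing, and it is what makes the constant $40=3\cdot 13+1$ emerge concretely.
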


\begin{proof}
Since all thick spiders are circular-arc graphs, it follows from
Theorem \ref{t:CA-B3-epg} that  $S_{40}$ is a $B_3$-EPG graph.
Suppose there exists a $B_2$-EPG representation of $S_{40}$. Let
us consider the path $\mathcal{P}_c$ corresponding to a vertex $c$
of the clique and the paths corresponding to its $39$ neighbors in
the stable set $S$. The path $\mathcal{P}_c$ uses at most three
lines (rows and/or columns) of the grid since it has at most 2
bends. Thus, $\mathcal{P}_c$ intersects at least $13$ paths,
$\mathcal{P}_1,\ldots,\mathcal{P}_{13}$, corresponding to 13 of
its neighbors in $S$ on a same line $x$. Without loss of
generality, we may assume that $x$ is a row of the grid. Notice
that, since the paths have at most 2 bends, the edges of each path
on a same row or column form a connected subpath. Consider now the
13 connected subpaths on $x$ corresponding to the paths
$\mathcal{P}_1,\ldots,\mathcal{P}_{13}$ that $\mathcal{P}_c$
intersects on row $x$. Since these paths correspond to vertices of
$S$, they are edge-disjoint and thus their subpaths on $x$ can be
ordered. We may assume then that $P_1,\ldots,P_{13}$, the subpaths
of $\mathcal{P}_1,\ldots,\mathcal{P}_{13}$ on row $x$, are ordered
by index from left to right. Let $s_j$, $j\in \{1,\ldots,39\}$, be
the vertex in $S$ corresponding to the path $\mathcal{P}_7$, i.e.,
$\mathcal{P}_{s_j}=\mathcal{P}_7$. The path $\mathcal{P}_{c_j}$,
corresponding to vertex $c_j$ of the clique that is not adjacent
to $s_j$, cannot intersect the subpaths corresponding to
$P_1,\ldots,P_{13}$ on both sides of $P_7$ on $x$ since it has at
most two bends. Thus, it intersects at least $6$ of them on some
other row or column. So we may assume, without loss of generality,
that it intersects the paths $\mathcal{P}_8,\ldots,
\mathcal{P}_{13}$ on some other row or column. But since these
paths have at most two bends, are edge-disjoint and all use row
$x$, it follows that $\mathcal{P}_{c_j}$ intersects at most 2
paths among $\mathcal{P}_8,\ldots, \mathcal{P}_{13}$ on a column.
Therefore $\mathcal{P}_{c_j}$ necessarily uses two rows and one
column, and it intersects at least 4 paths among
$\mathcal{P}_8,\ldots, \mathcal{P}_{13}$ on a row $x'\neq x$.
Since these 4 paths use both rows $x,x'$ and they are edge
disjoint, it follows that each of them uses a different column.
Also notice that the order of their corresponding subpaths on the
two rows $x,x'$ from left to right must be the same, since they
have at most 2 bends and therefore they cannot swap the order
without intersecting. Let $s_k$, $k\in \{1,\ldots,39\}$, be the
vertex corresponding to one of the two paths of these four whose
subpaths on row $x'$ are located in the middle (i.e., the second
or third subpath). It is now easy to see that, given that fixed
configuration for the four paths, it is impossible for the path
$\mathcal{P}_{c_k}$, corresponding to vertex $c_k$ of the clique
that is not adjacent to $s_k$, to avoid the path
$\mathcal{P}_{s_k}$ while intersecting the paths corresponding to
the remaining three vertices using only two bends. Thus $S_{40}$
does not admit a $B_2$-EPG representation.
\end{proof}

Finding a characterization of CA $\cap$ $B_2$-EPG and CA $\cap$ $B_1$-EPG by minimal forbidden induced subgraphs is left as an open problem.

\section{Circular-arc graphs as EPR graphs} \label{sub:epr}

In this section, we focus on circular-arc graphs that can be represented as edge intersection graphs of paths on a rectangle of the grid, i.e., we restrict ourselves to 2 rows and 2 columns of the grid. Obviously, $CA=B_4$-EPR since we can embed the CA model into a rectangle of the grid in a natural way. A strengthening of this observation is that the bend number of CA graphs with respect to EPR representations is equal to 4. To show this, it is sufficient to prove the following.

\begin{prop}
The thick spider $S_{13}$ is not in $B_3$-EPR.
\end{prop}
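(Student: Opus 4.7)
The plan is to derive a contradiction by combining a bend-budget argument on the edge-disjoint stable set paths with a geometric obstruction forced by the four-corner structure of the rectangle boundary.

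Assume that $S_{13}$ admits a $B_3$-EPR representation. Each path is then a sub-arc of the boundary of a rectangle, which is a cycle with exactly four corners; since a bend can occur only at a corner of the boundary, each path has at most three of the four corners in its interior. Let $P_1,\dots,P_{13}$ be the paths representing the stable-set vertices $s_1,\dots,s_{13}$.

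The first step would be to bound the total number of bends of the $P_j$. These paths are pairwise edge-disjoint, and any path that has a corner $c$ in its interior necessarily uses both boundary edges incident to $c$. Hence no two of the $P_j$ can have a common corner in their interiors, so $\sum_{j=1}^{13}\mathrm{bends}(P_j)\le 4$. Consequently, at least $13-4=9$ of the $P_j$ have no bend and therefore lie entirely on a single side of the rectangle. By pigeonhole over the four sides, at least three of these zero-bend paths lie on a common side; label them $P_{j_1},P_{j_2},P_{j_3}$ in the order they appear along that side.

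Now I would consider the clique vertex $c_{j_2}$, the unique non-neighbor of $s_{j_2}$, whose path $Q_{j_2}$ must share an edge with $P_{j_1}$ and with $P_{j_3}$ but no edge with $P_{j_2}$. Being a connected sub-arc of the boundary, and with $P_{j_2}$ separating $P_{j_1}$ from $P_{j_3}$ along their common side, $Q_{j_2}$ cannot reach both $P_{j_1}$ and $P_{j_3}$ while staying on that side. The only remaining possibility is to wrap around the rectangle through the three other sides; doing so forces a bend at each of the four corners, so $Q_{j_2}$ has at least four bends, contradicting the $B_3$-EPR hypothesis.

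The main obstacle is to justify this final assertion rigorously, ruling out that $Q_{j_2}$ could save a bend by placing one or both endpoints at a corner. A short case analysis handles this: since a non-corner grid point on the boundary lies on only one side, $Q_{j_2}$ can switch sides only at corners, and an endpoint at a corner $c$ produces a first edge on exactly one of the two sides meeting at $c$. Checking the possibilities shows that to overlap both $P_{j_1}$ and $P_{j_3}$, which lie on opposite sides of $P_{j_2}$ along their common side, $Q_{j_2}$ must enter that side from two distinct corners, producing bends at both; combined with the two bends needed to traverse the opposite side of the rectangle, this yields the four bends claimed.
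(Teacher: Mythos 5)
Your proposal is correct and follows essentially the same route as the paper's proof: bound by four the number of stable-set paths that can contain a corner (using their pairwise edge-disjointness), pigeonhole the remaining nine bend-free paths to find three on a common side, and observe that the clique path avoiding the middle one must wrap around all four corners and hence needs four bends. You merely spell out in more detail the corner-counting and the final wrap-around step, which the paper leaves terse.
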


\begin{proof}
By contradiction, suppose that $S_{13}$ admits a $B_3$-EPR
representation. Clearly, at most four of the paths corresponding
to vertices in the stable set contain a corner of the rectangle,
since they are pairwise non adjacent. So, from the remaining $9$
paths, representing vertices in the stable set, at least three of
them are intervals completely contained in one side of the
rectangle. Let us denote these paths in order by $\mathcal{P}_i$,
$\mathcal{P}_j$, $\mathcal{P}_k$, representing vertices $s_i$,
$s_j$, $s_k$. The path corresponding to vertex $c_j$ in the clique
has to intersect  $\mathcal{P}_i$, and $\mathcal{P}_k$ avoiding
$\mathcal{P}_j$, so it necessarily needs four bends, a
contradiction.
\end{proof}

We will now consider normal circular-arc graphs and show that their bend number with respect to EPR representations is equal to 2.

\begin{thm}
Every NCA graph is a $B_2$-EPR graph.
\end{thm}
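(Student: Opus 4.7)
The plan is to take the given normal circular-arc model $(\mathcal{A},\mathcal{C})$ of $G$, identify $\mathcal{C}$ with the boundary of a grid rectangle, and place four strategically chosen points of $\mathcal{C}$ at the four corners so that every arc $A\in\mathcal{A}$ contains at most two of them. Each arc then becomes a boundary path; since the number of bends of such a path equals the number of chosen corners lying strictly inside $A$, this will give a $B_2$-EPR representation of $G$.

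I distinguish two cases depending on whether $\mathcal{A}$ covers $\mathcal{C}$. If some point $p^{*}\in\mathcal{C}$ belongs to no arc of $\mathcal{A}$, then cutting $\mathcal{C}$ at $p^{*}$ turns every arc into an interval on a line, so $G$ is an interval graph and admits a $B_{0}$-EPR (hence $B_{2}$-EPR) representation; concretely, I would place all four corners in a small neighborhood of $p^{*}$ that no arc reaches. Otherwise every point of $\mathcal{C}$ is in some arc. Because NCA forbids any two arcs from covering $\mathcal{C}$, the family of closed complements $\{I_{A}=\mathcal{C}\setminus A\}_{A\in\mathcal{A}}$ pairwise intersects yet has empty total intersection, so Helly's theorem for arcs on a circle (Helly number~$3$) yields three arcs $A,B,C\in\mathcal{A}$ with $A\cup B\cup C=\mathcal{C}$. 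Applying NCA to each pair of them, none of $A\cup B$, $B\cup C$, $A\cup C$ covers $\mathcal{C}$, so the three pure regions $A\setminus(B\cup C)$, $B\setminus(A\cup C)$, $C\setminus(A\cup B)$ are all non-empty.

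In this covering case I would choose the four corners using the partition of $\mathcal{C}$ induced by the six endpoints of $A,B,C$. In the generic configuration (with $A\cap B\cap C=\emptyset$) this partition consists of six zones---three pure and three pairwise-intersection---in the alternating cyclic order \emph{pure $A$, $A\cap B$, pure $B$, $B\cap C$, pure $C$, $A\cap C$}. I would place the four corners so that each of $A,B,C$ contains exactly two of them, and then show that no other arc $D\in\mathcal{A}$ can contain three corners: the cyclic arrangement forces any such $D$ to span enough consecutive zones that $D\cup X=\mathcal{C}$ for some $X\in\{A,B,C\}$, contradicting NCA.

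The hard part will be making this last verification go through for every $D\in\mathcal{A}$. A naive placement---one corner in each pure region plus one in a pairwise-intersection region, or four corners at endpoints of $A,B$---does not work in general: one can exhibit valid NCA arcs $D$ containing three such corners while still satisfying $D\cup A$, $D\cup B$, $D\cup C\neq\mathcal{C}$. The placement must therefore be refined, probably by exploiting additional endpoints from arcs of $\mathcal{A}$ other than $A,B,C$, and the verification will require a case analysis on the cyclic arrangement of the six zones (including the degenerate case $A\cap B\cap C\neq\emptyset$) and on the possible positions of $D$ relative to them. This combinatorial case analysis is the crux of the proof.
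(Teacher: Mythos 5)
Your overall strategy (find four points of $\mathcal{C}$ such that every arc contains at most two of them, and put the corners there) is sound in principle, but the proof has a fatal step and is, by your own admission, unfinished. The fatal step is the appeal to ``Helly's theorem for arcs on a circle (Helly number 3)'': circular arcs do not have Helly number $3$, and the conclusion you draw from it is false. Concretely, parametrize $\mathcal{C}$ by $[0,1)$ and take the four open arcs $A_1=(0,\tfrac12)$, $A_2=(\tfrac14,\tfrac34)$, $A_3=(\tfrac12,1)$, $A_4=(\tfrac34,\tfrac14)$, each of length $\tfrac12$. No two of them cover $\mathcal{C}$ (so the model is normal), their union is all of $\mathcal{C}$, yet every triple misses one of the points $0,\tfrac14,\tfrac12,\tfrac34$; equivalently, the closed complements pairwise intersect and every three of them have a common point, but all four do not. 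So in the covering case you cannot in general extract three arcs $A,B,C$ with $A\cup B\cup C=\mathcal{C}$, and the entire six-zone analysis that follows has no starting point. On top of this, even in the situations where three covering arcs do exist, you explicitly leave the corner placement and its verification open (``this combinatorial case analysis is the crux of the proof''), so no complete argument is given for any nontrivial case.

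The intended proof is much shorter and avoids all of this. Pick any point $p$ of $\mathcal{C}$ that is not an arc endpoint. The union of the arcs containing $p$ is itself an arc through $p$, determined by the two arcs extending farthest clockwise and counterclockwise from $p$; if this union were all of $\mathcal{C}$, those two arcs would cover $\mathcal{C}$, contradicting normality. Hence there is a point $q$ lying in no arc that contains $p$, so no arc contains both $p$ and $q$. Now map $\mathcal{C}$ onto the rectangle so that $p$ corresponds to one side (the two consecutive corners bounding it) and $q$ to the opposite side. An arc containing neither $p$ nor $q$ becomes a straight path, and an arc containing exactly one of them crosses exactly two corners, hence has two bends. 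You only ever need \emph{two} special points with the property that no arc contains both, and normality hands you those immediately; no covering triple, no Helly-type argument, and no case analysis is required.
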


\begin{proof}
Let $(\mathcal{A},\mathcal{C})$ be a NCA model of a circular-arc graph. Without loss of generality, we may assume that the endpoints of the arcs are pairwise different. Let $p$ be a point of $\mathcal{C}$ that is not the endpoint of an arc of $\mathcal{A}$. Since the model is normal, the union of the arcs of $\mathcal{A}$ that contain $p$ does not cover $\mathcal{C}$. Thus, there exists a point $q$ in $\mathcal{C}$ that is not the endpoint of an arc of $\mathcal{A}$ and is not contained in the union of the arcs of $\mathcal{A}$ containing $p$. We can then embed our model on a rectangle of the grid in the following way (arcs will correspond to paths and $\mathcal{C}$ will correspond to the rectangle): two consecutive corners correspond to point $p$ of the circle and the remaining two corners correspond to point $q$ of the circle. In this way, since no arc of $\mathcal{A}$ contains both $p$ and $q$, paths corresponding to arcs containing either $p$ or $q$ have two bends, while paths corresponding to arcs containing neither $p$ nor $q$ have no bend.
\end{proof}

In order to show that the bend number of NCA graphs with respect
to EPR representations is equal to 2, consider the graph $S_3$.
$S_3$ is not in NHCA~\cite{C-G-S-nhca}, so in particular it is not
in $B_1$-EPR (see Theorem~\ref{t:B1-NHCA-c7c}), but it is in
$B_1$-EPG~\cite{Golumbic-epg} and it is also in NCA (see for
example \cite{C-G-S-nhca}).

Let us also show that there exist $B_2$-EPR graphs that are not in
NCA.

\begin{prop}\label{p:s6}
The thick spider $S_{6}$ is in $B_2$-EPR $\setminus$ NCA.
\end{prop}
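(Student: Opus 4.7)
The plan is to prove the two inclusions $S_6 \in B_2$-EPR and $S_6 \notin$ NCA separately; the second is the more delicate direction.

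For the $B_2$-EPR part, I take a sufficiently large rectangle of the grid and place the six stable-set paths so that they are pairwise edge-disjoint: $s_1, s_2, s_3, s_4$ are L-shaped one-bend paths anchored at the four corners TL, TR, BR, BL respectively, each using the edge adjacent to its corner on the horizontal side together with the edge adjacent to its corner on the vertical side; $s_5$ is a single edge in the middle of the top side and $s_6$ is a single edge in the middle of the bottom side. I then define the six clique paths as two-bend paths, each covering exactly three of the four sides of the rectangle. Specifically, $c_1$ runs from the top side just to the right of $s_1$, along the rest of the top to TR, down the right side to BR, and along the bottom to BL, with bends at TR and BR; the paths $c_2, c_3, c_4$ are defined by the analogous rotations avoiding the corresponding corner of $s_2, s_3, s_4$. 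The path $c_5$ runs from TR down the right side to BR, along the bottom to BL, and up the left side to TL, with bends at BR and BL, so that it uses no top edges and in particular no edges of $s_5$. Symmetrically, $c_6$ runs along the right, top and left sides and uses no bottom edges. A direct check shows that each pair $c_i, c_j$ shares edges on at least one common side, each $c_i$ shares an edge with every $s_j$ for $j \neq i$, and $c_i$ is edge-disjoint from $s_i$, giving a $B_2$-EPR representation of $S_6$.

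For $S_6 \notin$ NCA, I argue by contradiction. Suppose $(\mathcal{A},\mathcal{C})$ is an NCA model of $S_6$, and let $a_i$ and $b_i$ denote the open arcs representing $c_i$ and $s_i$ respectively. The arcs $b_1, \dots, b_6$ are pairwise disjoint, so after relabelling I may assume that they appear in this cyclic order around $\mathcal{C}$. For each $i$, the closed arc $a_i^c$ contains $b_i$; moreover, for every $j \neq i$, $a_i^c$ cannot contain $b_j$ entirely, for otherwise $a_i \cap b_j = \emptyset$, contradicting the adjacency $c_i s_j$ in $S_6$. Since $a_i^c$ is a connected closed arc, this means that its clockwise endpoint cannot reach past $b_{i+1}$ and its counterclockwise endpoint cannot reach past $b_{i-1}$, because crossing $b_{i\pm 1}$ would force $a_i^c$ to contain that arc entirely. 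Hence $a_i^c$ is contained in the closed cyclic region running from (within) $b_{i-1}$ through $b_i$ to (within) $b_{i+1}$. Applying this with $i = 1$ and $i = 4$, the arcs $a_1^c$ and $a_4^c$ lie in two cyclically disjoint regions containing $b_6, b_1, b_2$ and $b_3, b_4, b_5$ respectively (together with the gaps in between). Therefore $a_1^c \cap a_4^c = \emptyset$, i.e., $a_1 \cup a_4 = \mathcal{C}$, which contradicts normality.

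The main obstacle is the observation driving the NCA argument: in any circular-arc model of $S_6$, the complement of each clique arc cannot extend past either of its two cyclically adjacent stable-set arcs, because it is a single connected closed arc that cannot fully contain any $b_j$ with $j \neq i$. Specialising to $n = 6$ then confines $a_1^c$ and $a_4^c$ to cyclically opposite regions of $\mathcal{C}$, producing the required contradiction. The $B_2$-EPR construction itself is explicit and uses only edges on the boundary of the rectangle; the verification of all clique/clique and clique/stable incidences is a routine case analysis.
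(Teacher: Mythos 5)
Your argument that $S_6\notin$ NCA is correct and follows exactly the paper's route: the paper merely asserts that in any circular-arc model the arcs representing $c_1$ and $c_4$ cover the circle, and you supply the details it omits (each complement $a_i^c$ is a connected closed arc containing $b_i$ and no other $b_j$, hence is trapped strictly between the far endpoints of $b_{i-1}$ and $b_{i+1}$; applied to $i=1$ and $i=4$ this forces $a_1^c\cap a_4^c=\emptyset$, i.e.\ $a_1\cup a_4=\mathcal{C}$). That half is complete and sound.

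The explicit $B_2$-EPR construction, however, contains a concrete error. Every clique vertex $c_i$ with $i\in\{1,2,3,4\}$ is adjacent to both $s_5$ and $s_6$, which you place in the interiors of the top and bottom sides; a $2$-bend path on the boundary of the rectangle has three segments lying on three sides, so each such $c_i$ must use both the top and the bottom side. Your $c_1$ (top--right--bottom) and its $180^{\circ}$ rotation $c_3$ do, but the $90^{\circ}$ and $270^{\circ}$ rotations give $c_2$ = right--bottom--left and $c_4$ = left--top--right, which use no top (resp.\ no bottom) edges and therefore fail to intersect $s_5$ (resp.\ $s_6$); the claimed ``direct check'' would catch this. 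The fix is forced rather than optional: a $2$-bend boundary path using segments on both the top and the right side must bend at the corner TR and hence meets $s_2$, so $c_2$ cannot avoid $s_2$ while using the right side at all; it must be the top--left--bottom path entering the top just to the left of $s_2$'s top edge, and symmetrically $c_4$ must be a bottom--right--top path entering the bottom just to the right of $s_4$'s bottom edge. With that correction all incidences check out, and the representation matches in spirit the figure the paper refers to for this half of the proposition.
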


\begin{proof}
Let $(\mathcal{A},\mathcal{C})$ be a circular-arc model of $S_{6}$. Without loss of generality, we may assume that the disjoint arcs $\mathcal{A}_1,\dots,\mathcal{A}_6$ representing the vertices $s_1,\dots,s_6$ in the stable set are in clockwise order. It is not difficult to see that the arcs representing vertices $c_1$ and $c_4$ cover the circle. Thus, $S_6$ is not in NCA. A $B_2$-EPR representation of $S_{6}$ is given in Figure~\ref{fig:spiders}.
\end{proof}

We will now focus on $B_1$-EPR graphs and show that they are NHCA graphs.

\begin{lem}
\label{t:B1-NHCA}
$B_1$-EPR $\subseteq$ NHCA.
\end{lem}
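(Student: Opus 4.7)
The plan is to take a $B_1$-EPR representation of $G$ on a rectangle $R$ and extract from it a circular-arc model $(\mathcal{A},\mathcal{C})$ of $G$ in which no three arcs cover $\mathcal{C}$; by the characterization recalled in Section~\ref{s:preliminares}, any such model is a normal Helly circular-arc model.

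First I would exploit the geometry of $B_1$-EPR. Each path in such a representation is either a straight segment or an L-shape, and each L-shape has a canonical orientation: I will say that it \emph{hugs} the corner of $R$ lying on the concave side of its right angle, giving four types $L_{NW}, L_{NE}, L_{SE}, L_{SW}$. I then take $\mathcal{C}$ to be the boundary of $R$, with its four corners $c_{NW}, c_{NE}, c_{SE}, c_{SW}$ as distinguished points, and for every path $P$ I define an arc $A(P)\subseteq\mathcal{C}$ satisfying: \textbf{(i)} $A(P)$ contains $c_C$ if $P$ is an L-shape hugging $c_C$, and contains none of the four distinguished points if $P$ is a straight segment; \textbf{(ii)} $A(P_1)\cap A(P_2)\neq\emptyset$ iff the paths $P_1$ and $P_2$ share a grid edge. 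The idea of the definition is to encode into coordinates along the four sides of $\mathcal{C}$ the row and the column each path uses, together with the extents along them: a straight segment yields an arc contained in a single side of $\mathcal{C}$ (hence disjoint from all corners), whereas an L-shape hugging $c_C$ yields an arc that straddles $c_C$, gluing a ``row-sub-interval'' on one side of $\mathcal{C}$ to a ``column-sub-interval'' on the side adjacent through $c_C$.

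From property \textbf{(i)}, any three arcs in $\mathcal{A}$ together contain at most three of the four distinguished corners, so at least one $c_C$ remains uncovered; hence no three arcs cover $\mathcal{C}$, and combined with \textbf{(ii)} this gives a NHCA model of $G$, proving the inclusion $B_1$-EPR $\subseteq$ NHCA.

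The main obstacle, and the technical core of the argument, is the adjacency-preserving encoding in \textbf{(ii)}. One must place the row- and column-information consistently on the four sides of $\mathcal{C}$ so that every adjacency arising from a shared row or column is realized as an arc overlap, while no spurious intersection is created between arcs whose underlying paths are edge-disjoint. The delicate step is handling straight segments and L-shapes of all four orientations simultaneously, since a straight segment on a given row (resp. column) can be adjacent to L-shapes of any of the four orientations through that same row (resp. column), and the cyclic layout on $\mathcal{C}$ must accommodate all these interactions while keeping each arc inside the ``at most one distinguished point'' constraint needed for the NHCA conclusion.
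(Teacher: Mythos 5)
Your overall skeleton --- identify the boundary of the rectangle with a circle, turn paths into open arcs, observe that each arc contains at most one of the four corners so that no three arcs can cover the circle, and invoke the characterization of normal Helly models as those in which no three or fewer arcs cover the circle --- is exactly the paper's argument. The problem is that you declare the ``adjacency-preserving encoding'' in your property \textbf{(ii)} to be the unresolved technical core and never carry it out, and the way you describe that difficulty indicates a misreading of the $B_1$-EPR setting. In an EPR representation the union of \emph{all} the paths is contained in the boundary of a single rectangle $\mathcal{R}$ (two rows and two columns of the grid), so every path already \emph{is} a sub-arc of that boundary: a straight segment inside one side, or an L-shape occupying parts of two adjacent sides around the corner it hugs. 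There is no row/column information to ``place on the four sides'' and no cyclic layout to design: the map $P \mapsto A(P)$ is just the inclusion of $P$ into $\partial\mathcal{R} \cong \mathcal{C}$, with endpoints removed so that the arcs are open, which is precisely what guarantees that two arcs intersect if and only if the underlying paths share a grid \emph{edge} rather than merely a grid point. Property \textbf{(ii)} is then immediate, and your corner-counting in the third paragraph finishes the proof.

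The scenario you flag as delicate --- a straight segment on a given row being adjacent to L-shapes of all four orientations through that row --- cannot occur, because the only two rows available are the top and bottom sides of $\mathcal{R}$, and a path with edges on the top side can only be a segment of that side or an L-shape hugging one of the two top corners. Had the paths been allowed to use arbitrary rows and columns of the grid (that is, had you been trying to prove $B_1$-EPG $\subseteq$ NHCA), the encoding you describe would indeed be the crux --- and it would be impossible in general, since $S_3$ is a $B_1$-EPG graph that is not NHCA. The restriction to a rectangle is exactly what turns this lemma into a short observation rather than a combinatorial construction, so you should make that restriction explicit and then simply take $A(P)$ to be the open arc equal to $P$ itself.
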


\begin{proof}
Consider a $B_1$-EPR representation of a graph $G$ and let $\cal P$ be the set of paths corresponding to the vertices of $G$. We will consider the natural bijection between the rectangle $\mathcal{R}$ and a circle $\mathcal{C}$, that maps the paths in $\mathcal{P}$ to open arcs $\mathcal{A}$ of $\mathcal{C}$. Notice that two open arcs intersect if and only if the corresponding paths of $\mathcal{P}$ intersect on an least one edge of the grid. Thus, $(\mathcal{A},\mathcal{C})$ is a circular-arc representation of $G$. Now, since each path has at most one bend and the arcs are open, the union of three (resp. two) arcs of $\mathcal{A}$ contains at most three (resp. two) points of $\mathcal{C}$ corresponding to corners of $\mathcal{R}$. Hence $(\mathcal{A},\mathcal{C})$ is a NHCA model for $G$.
\end{proof}

Next, we will present a family of NHCA graphs that are not in the class of $B_1$-EPG graphs. First we need to introduce some more definitions. It was shown in \cite{Golumbic-epg} that an induced cycle $C_4$ in a graph $G$ corresponds to either a \textit{true pie} or a \textit{false pie} or a \textit{frame} in any $B_1$-EPG representation of $G$ (see Figure~\ref{fig:C4-B1}).  In a true pie or a false pie, the paths representing the vertices of the induced $C_4$ use one common grid point which is defined as the \textit{center of the pie}. A frame is a model of $C_4$ such that each of the four corresponding paths has a bend in one of the four corners of a rectangle of the grid. Some examples of frame models are shown in Figure~\ref{fig:C4-B1}.

\begin{lem}
\label{t:powers}
Powers of cycles $C_{4k-1}^k$, with $k \geq 2$, are not in $B_1$-EPG.
\end{lem}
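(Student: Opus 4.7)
The plan is to argue by contradiction: assume $G = C_{4k-1}^k$ admits a $B_1$-EPG representation $\mathcal{P} = \{\mathcal{P}_0,\dots,\mathcal{P}_{4k-2}\}$, where indices are taken cyclically modulo $4k-1$ and $v_iv_j \in E(G)$ iff the cyclic distance $d(i,j)$ is at most $k$. The first move is to isolate a canonical induced $C_4$, namely the set $\{v_0,v_k,v_{2k},v_{3k}\}$. The consecutive cyclic distances around this $4$-tuple are $k,k,k,k-1$, so all four consecutive pairs are edges; the two ``diagonal'' pairs $\{v_0,v_{2k}\}$ and $\{v_k,v_{3k}\}$ both have cyclic distance $2k-1>k$ for $k\ge 2$, so they are non-edges. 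Hence $\{v_0,v_k,v_{2k},v_{3k}\}$ induces a $C_4$, and by the paragraph preceding the lemma, the four paths $\mathcal{P}_0,\mathcal{P}_k,\mathcal{P}_{2k},\mathcal{P}_{3k}$ realize this $C_4$ as a true pie, a false pie, or a frame.

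The second move is to exploit the ``intermediate'' vertices between the four corners of this $C_4$. For each $i \in \{1,\dots,k-1\}$, the vertex $v_i$ is adjacent to both $v_0$ and $v_k$, and for $i \ge 2$ it is non-adjacent to both $v_{2k}$ and $v_{3k}$ (one checks: the cyclic distance from $v_i$ to $v_{2k}$ is $2k-i \ge k+1$, and to $v_{3k}$ is $k-1+i\ge k+1$). Thus, taking e.g.\ $v_2$ (and analogous vertices $v_{k+2}$, $v_{2k+2}$ in the other arcs), one obtains four vertices, one per arc, each of whose path must share a grid edge with exactly two \emph{consecutive} corner paths of the $C_4$ and avoid sharing a grid edge with the other two. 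This is a very restrictive constraint on a $1$-bend path.

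The third move is the case analysis. In either pie case, the four corner paths share a common grid point $p$ and their bends occur at (or next to) $p$, so the four arms emanating from $p$ are completely fixed. A $1$-bend path that shares an edge with exactly two consecutive corner paths must sit in the wedge delimited by their two arms; but then avoiding the other two corner paths forces it to end before reaching $p$. Running this argument simultaneously for the intermediate vertex in each of the four arcs, together with their pairwise adjacency/non-adjacency pattern in $G$ (intermediate vertices from ``opposite'' arcs are non-adjacent for $k\ge 2$, so their paths must be edge-disjoint), produces a counting contradiction on the number of available arm edges. In the frame case, the four corner paths are $L$-shapes with bends at the four corners of a grid rectangle $R$; each intermediate vertex's path is forced to lie along a specific side of $R$, and again the constraint that intermediate paths from opposite arcs cannot share an edge cannot be satisfied because there are too many of them relative to the available edges on the frame.

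The main obstacle will be making the wedge/side analysis fully rigorous, especially in the frame case where intermediate paths can exit the rectangle in several ways. The key bookkeeping is to track, for each of the four corner paths, which of its two segments is ``used'' by each family of intermediate paths, and to show that the cyclic pattern of required-shared and required-avoided edges around the $C_4$ cannot be realized simultaneously once $k \ge 2$, since $2(k-1)\ge 2$ intermediate paths per arc must be accommodated along only a bounded number of grid edges.
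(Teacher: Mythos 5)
Your setup is fine: the $4$-tuple $\{v_0,v_k,v_{2k},v_{3k}\}$ does induce a $C_4$ (note the fourth gap has length $k-1$, not $k$), and invoking the true-pie/false-pie/frame trichotomy is exactly how the paper begins. But from there your plan diverges from the paper's and has two genuine gaps. First, your auxiliary vertices do not exist in the base case: for $k=2$ the set $\{2,\dots,k-1\}$ is empty and $v_2=v_k$, $v_{k+2}=v_{2k}$, $v_{2k+2}=v_{3k}$ are themselves corners, so for $C_7^2$ there is no vertex of your prescribed form adjacent to exactly two consecutive corners in the arc between $v_0$ and $v_k$ (and the short arc between $v_{3k}$ and $v_0$ contains only $k-2$ interior vertices, so it too is empty for $k=2$). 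Since $C_7^2$ is the smallest and most important member of the family, the argument cannot start there. Second, and more seriously, the ``counting contradiction on the number of available arm edges'' is never exhibited and it is not clear one exists: the grid is unbounded, the arms of a pie (or the sides of a frame) can be arbitrarily long, and intermediate paths coming from the same arc are pairwise adjacent and may freely share edges, so confining each family of intermediate paths to a wedge or a side does not by itself overload anything. Even the local claim that a $1$-bend path meeting two consecutive corner paths and missing the other two ``must sit in the wedge delimited by their two arms'' has unhandled cases, e.g.\ it may meet $\mathcal{P}_0$ on the arm that $\mathcal{P}_0$ shares with $\mathcal{P}_{3k}$, beyond the point where $\mathcal{P}_{3k}$ ends.

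The paper's proof avoids both problems by choosing, for each corner $v_j$ of the $C_4$, its successor $v_{j+1}$, which is adjacent to \emph{three} of the four corners and non-adjacent to the remaining one. This much stronger adjacency pattern pins $\mathcal{P}_{j+1}$ to exactly the position of $\mathcal{P}_j$ (same semi-row and semi-column through the pie center, or the same two sides of the frame), rather than merely confining it to a region. One then observes that the shifted $4$-tuple is again an induced $C_4$ in the same configuration and iterates; after $k-1$ shifts the indices wrap around ($v_{3k+1+(k-1)}=v_{4k}=v_1$ modulo $4k-1$ in the paper's labelling), forcing $\mathcal{P}_1$ to occupy two distinct slots simultaneously, which is impossible for a $1$-bend path. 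If you want to salvage your approach you would need either to supply this kind of rigid forcing-and-propagation mechanism or to find an actual finite quantity that your configuration overloads; as written, neither is present.
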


\begin{proof}
Let $k \geq 2$ and let $G$ be the graph $C_{4k-1}^k$, where the vertices of the cycle are denoted by $v_1, \ldots, v_n$, with $n=4k-1$. Suppose, by contradiction, that G admits a $B_1$-EPG representation. Note that $v_1$, $v_{k+1}$, $v_{2k+1}$ and $v_{3k+1}$ induce a $C_4$ in $G$.

We will consider the possible representations of the $4$-cycle induced by $v_1$, $v_{k+1}$, $v_{2k+1}$ and $v_{3k+1}$. Let us denote by ${\mathcal P}_i$ the path corresponding to $v_i$, for $i=1,\dots,n$. We will inductively show that, for each of the possible representations, the path $\mathcal{P}_{i+1}$ has to share with ${\mathcal P}_i$ a special point $p_i$ and its two incident edges of the grid. The point $p_i$ will be either the bend of ${\mathcal P}_i$ or the center of the pie in case of pie representations.

Suppose first that the cycle is represented by a true pie using row $x$ and column $y$ of the grid. Let $p$ be the intersection point of $x$ and $y$. Vertex $v_2$ is adjacent to $v_1$,
$v_{k+1}$, and $v_{3k+1}$ in $G$. Since its corresponding path ${\mathcal P}_2$ must intersect ${\mathcal P}_1$, ${\mathcal P}_{k+1}$ and ${\mathcal P}_{3k+1}$ and it cannot intersect ${\mathcal P}_{2k+1}$, it is forced to have a bend at $p$ and use the same semi-row and semi-column as ${\mathcal P}_1$. The same argument can then be applied to $v_{k+2}$ and ${\mathcal P}_{k+2}$ with respect to ${\mathcal P}_{k+1}$, to $v_{2k+2}$ and ${\mathcal P}_{2k+2}$ with respect to ${\mathcal P}_{2k+1}$ and to $v_{3k+2}$ and ${\mathcal P}_{3k+2}$ with respect to ${\mathcal P}_{3k+1}$. Considering now the cycle $v_2$, $v_{k+2}$, $v_{2k+2}$ and $v_{3k+2}$, we can repeat the process and, after $k-1$ iterations, we will reach a contradiction because ${\mathcal P}_1$ should not use the same semi-row and semi-column as ${\mathcal P}_{3k+1}$.

Suppose now that the $4$-cycle is represented by a false pie with center $p=(x,y)$. By symmetry, we may assume that ${\mathcal P}_1$ has a bend at $p$ and ${\mathcal P}_{k+1}$ uses edges on row $x$ on both sides of $p$. Clearly, no path with at most one bend uses edges on two different rows or columns. Since $v_2$ is adjacent to $v_{3k+1}$, $v_1$, and $v_{k+1}$, the path $\mathcal{P}_2$ must have a bend at $p$ and use the same semi-row and semi-column as ${\mathcal P}_1$. A similar argument shows that ${\mathcal P}_{k+2}$ must use edges on row $x$ on both sides of $p$, as ${\mathcal P}_{k+1}$. Symmetrically, an analogous situation holds for ${\mathcal P}_{2k+2}$ with respect to ${\mathcal P}_{2k+1}$ and for ${\mathcal P}_{3k+2}$ with respect to ${\mathcal P}_{3k+1}$. As above, we can repeat the process and, after $k-1$ iterations, we will reach again a contradiction.

Finally, suppose that the $4$-cycle is represented by a frame on the rectangle defined by rows $x$ and $x'$ and columns $y$ and $y'$. Suppose that ${\mathcal P}_1$ has edges on $x$ and $y$,
${\mathcal P}_{k+1}$ on $y$ and $x'$, ${\mathcal P}_{2k+1}$ on $x'$ and $y'$, and ${\mathcal P}_{3k+1}$ on $y'$ and $x$. In order to have edge intersections with ${\mathcal P}_{1}$, ${\mathcal P}_{k+1}$, and ${\mathcal P}_{3k+1}$, the only possible row-column combination for the path ${\mathcal P}_2$ is to use edges on $x$ and $y$. Now repeating similar, iterative arguments as previously, we will reach again a contradiction.
\end{proof}

\begin{figure}
\begin{center}\includegraphics[width=\textwidth]{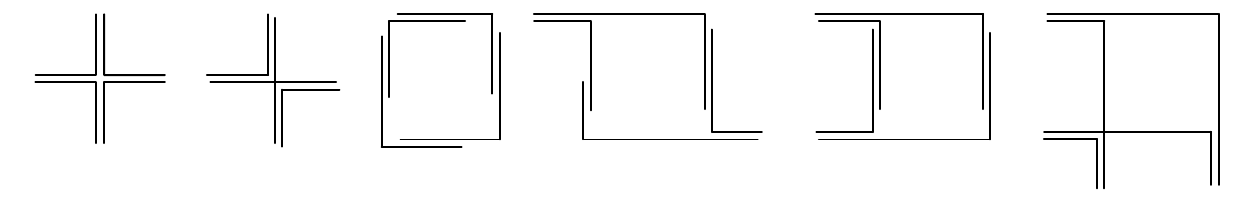}
\caption{From left
to right: a true pie, a false pie and four examples of a frame~\cite{Golumbic-epg}.}\label{fig:C4-B1}
\end{center}
\end{figure}


In the next theorem, we will introduce five equivalent statements
for $B_1$-EPR graphs that are not chordal, two of them using
structural properties, other two in terms of its NHCA models, and
one in terms of forbidden induced subgraphs.

\begin{thm}
\label{t:B1-epr-char}
Let $G=(V,E)$ be a graph which is not chordal. Then the following statements are equivalent:\

\noindent $(i)$ $G\in B_1$-EPR;\

\noindent $(ii)$ $G\in$ NHCA and $G$ contains no $C_{4k-1}^k$, with $k \geq 2$, as induced subgraph;\

\noindent $(iii)$ $G \in$ NHCA and admits a NHCA model $(\mathcal{A}, \mathcal{C})$ with the following property: there are four points of $\mathcal{C}$, different from the endpoints of the arcs of $\mathcal{A}$, such that no arc of $\mathcal{A}$ contains two of these points;\

\noindent $(iii')$ $G \in$ NHCA and in every NHCA model $(\mathcal{A}, \mathcal{C})$ of $G$ there are four points of $\mathcal{C}$, different from the endpoints of the arcs of
$\mathcal{A}$, such that no arc of $\mathcal{A}$ contains two of these points;\

\noindent $(iv)$ $G \in$ NHCA and $G$ has four disjoint connected subgraphs $H_1$, $H_2$, $H_3$, $H_4$, such that $H_1$ and $H_3$ are in different connected components of $G \setminus (V(H_2) \cup V(H_4))$ and $H_2$ and $H_4$ are in different connected components of $G \setminus (V(H_1) \cup V(H_3))$.\

\noindent $(iv')$ $G \in$ NHCA and $G$ has four disjoint complete
subgraphs $H_1$, $H_2$, $H_3$, $H_4$, such that $H_1$ and $H_3$
are in different connected components of $G \setminus (V(H_2) \cup
V(H_4))$ and $H_2$ and $H_4$ are in different connected components
of $G \setminus (V(H_1) \cup V(H_3))$.\
\end{thm}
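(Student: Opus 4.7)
The plan is to establish all six equivalences via the cycle $(i) \Rightarrow (ii) \Rightarrow (iv') \Rightarrow (iv) \Rightarrow (iii) \Rightarrow (i)$, plus a separate argument from $(iv')$ that yields the stronger statement $(iii')$ directly. The trivial links $(iv') \Rightarrow (iv)$ and $(iii') \Rightarrow (iii)$ come for free from the definitions.

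For $(i) \Rightarrow (ii)$ I combine Lemma \ref{t:B1-NHCA}, which places $G$ in NHCA, with Lemma \ref{t:powers} and the observation that $B_1$-EPR is closed under induced subgraphs (just restrict the paths), so no $C_{4k-1}^k$ with $k \geq 2$ can occur as an induced subgraph. For $(iii) \Rightarrow (i)$, I take the NHCA model witnessing $(iii)$, fix a homeomorphism of $\mathcal{C}$ onto the boundary of a rectangle that sends $p_1, p_2, p_3, p_4$ to the four corners in cyclic order, and read off a $B_1$-EPR representation: each arc becomes a path whose only possible bends lie at the images of the $p_i$, of which it contains at most one by hypothesis.

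For $(iv) \Rightarrow (iii)$ (and, run from $(iv')$, for the stronger $(iv') \Rightarrow (iii')$), I work in an arbitrary NHCA model $(\mathcal{A}, \mathcal{C})$ of $G$. The connected subgraph $H_j$ corresponds to a connected union of arcs of $\mathcal{A}$, hence to a proper arc $\alpha_j$ of $\mathcal{C}$ (it cannot cover $\mathcal{C}$, since otherwise some other $H_i$ could not be cyclically separated from the rest). The four arcs $\alpha_1, \ldots, \alpha_4$ must appear in cyclic order on $\mathcal{C}$: if $\alpha_1$ and $\alpha_3$ were not cyclically separated by $\alpha_2$ and $\alpha_4$, then some arc would connect a vertex of $H_1$ to a vertex of $H_3$ without meeting $V(H_2) \cup V(H_4)$, contradicting $(iv)$. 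Moreover, between cyclically consecutive $\alpha_j$ and $\alpha_{j+1}$ there must exist a point $p_j \in \mathcal{C}$, distinct from all endpoints, that is not covered by any arc of $\mathcal{A}$; otherwise an arc bridging $\alpha_j$ and $\alpha_{j+1}$ would create an unwanted connection after the appropriate removal. The four $p_j$'s satisfy $(iii)$, and since the argument runs in any NHCA model when starting from $(iv')$, we obtain $(iii')$.

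The main substance, and what I expect to be the principal obstacle, is $(ii) \Rightarrow (iv')$. Fix an NHCA model; since $G$ is non-chordal, there is an induced cycle whose arcs wrap around $\mathcal{C}$, producing at least four natural gap regions. I would choose four points $q_1, q_2, q_3, q_4$ on $\mathcal{C}$ such that the sets $H_j$ of vertices whose arcs contain $q_j$ (cliques by Helly) cyclically separate each other in $G$. The delicate point is that if every such choice fails, then one can extract a cyclic chain of $4k-1$ arcs around $\mathcal{C}$ where each arc intersects exactly the next $k-1$ and previous $k-1$ in cyclic order; the vertices of this chain induce a $C_{4k-1}^k$ in $G$, contradicting $(ii)$. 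I would carry out this extraction by an extremal argument, starting from a minimal-length wrapping cycle and using normality at each step to secure the gap points between consecutive candidate $q_j$'s, so that the failure of the clique-separation property forces exactly the forbidden power-of-cycle structure.
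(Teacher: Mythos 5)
Your overall cycle of implications is logically complete, and the easy links ($(i)\Rightarrow(ii)$ via Lemmas \ref{t:B1-NHCA} and \ref{t:powers}, and $(iii)\Rightarrow(i)$ by placing the corners at the four points) match the paper. But two of your steps have genuine problems. First, in $(iv)\Rightarrow(iii)$ you assert that between cyclically consecutive sectors $\alpha_j$ and $\alpha_{j+1}$ there is a point of $\mathcal{C}$ \emph{not covered by any arc} of $\mathcal{A}$. This cannot happen: since $G$ is not chordal, it is not an interval graph, so in every circular-arc model the arcs cover the whole circle. The correct (and weaker) statement, which is what the paper establishes, is that one can choose four points, each possibly covered by many arcs, such that no \emph{single} arc contains two of them; getting this requires tracking not just the arcs of the $H_i$ but the full connected sectors of $\mathcal{A}\setminus(\mathcal{A}^1\cup\mathcal{A}^3)$ and $\mathcal{A}\setminus(\mathcal{A}^2\cup\mathcal{A}^4)$ and arguing about where the remaining arcs can lie. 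As written, your separation argument fails at its key step.

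Second, the implication $(ii)\Rightarrow(iv')$ is where you have put all the real difficulty, and you have only a plan, not a proof. The assertion that the failure of every choice of four separating cliques ``forces exactly'' an induced $C_{4k-1}^k$ is the entire content of the theorem's hard direction, and it is subtler than your sketch suggests: the obstruction that emerges is not in general a $C_{4k-1}^k$ read off directly from a wrapping cycle, but some $C_{4t-1}^t$ for a possibly different parameter $t$. The paper handles this (in its $(ii)\Rightarrow(iii)$ step) by passing to a minimal counterexample, proving that no arc is properly contained in another and no vertex dominates another, invoking the Golumbic--Hammer theorem to conclude the counterexample is a power of a cycle $C_n^k$, deriving the inequalities $6k<2n<8k$ (i.e.\ $3<n/k<4$), and then proving the containment criterion that $C_n^k$ contains $C_{4t-1}^t$ if and only if $(4t-5)/(t-1)<n/k\leq(4t-1)/t$, together with the fact that these intervals cover $(3,4)$ as $t\to\infty$. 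None of this machinery appears in your extraction sketch, and without it the claim that ``failure forces the forbidden structure'' is unsubstantiated. You would need to supply an argument of comparable substance for your proposal to stand.
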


\begin{proof}
We will prove $(iii) \Rightarrow (iv')$, $(iv) \Rightarrow
(iii')$, $(i) \Rightarrow (ii)$, $(ii) \Rightarrow (iii)$, $(iii)
\Rightarrow (i)$. The implications $(iii') \Rightarrow (iii)$ and
$(iv') \Rightarrow (iv)$ are straightforward.

\

$(iii) \Rightarrow (iv')$: Let $(\mathcal{A}, \mathcal{C})$ be a
circular-arc model of $G$ such that there exist four points $p_1,
\dots, p_4$ (clockwise) of $\mathcal{C}$ satisfying that no arc of
$\mathcal{A}$ contains two of these points. Define $H_i$ as the
subgraph induced by the vertices corresponding to arcs of
$\mathcal{A}$ containing $p_i$, for $i=1,\dots,4$. Since $G$ is
not chordal, the four graphs $H_1, \dots, H_4$ are clearly non
empty complete subgraphs, and since no arc contains two of the
four points, they are disjoint. By the topology of the circle and
the order of the points on it, it follows that every path
connecting a vertex corresponding to an arc containing $p_1$ and a
vertex corresponding to an arc containing $p_3$ necessarily
contains either a vertex corresponding to an arc containing $p_2$
or a vertex corresponding to an arc containing $p_4$. So, $H_1$
and $H_3$ are in different connected components of $G \setminus
(V(H_2) \cup V(H_4))$ and, analogously, $H_2$ and $H_4$ are in
different connected components of $G \setminus (V(H_1) \cup
V(H_3))$.\

\

$(iv) \Rightarrow (iii')$: Let $H_1$, $H_2$, $H_3$, $H_4$ be
disjoint connected subgraphs of $G$ such that $H_1$ and $H_3$ are
in different connected components of $G \setminus (V(H_2) \cup
V(H_4))$ and $H_2$ and $H_4$ are in different connected components
of $G \setminus (V(H_1) \cup V(H_3))$. Let $(\mathcal{A},
\mathcal{C})$ be a NHCA model of $G$, and let $\mathcal{A}^i$ be
the set of arcs corresponding to vertices of $H_i$, for
$i=1,\dots,4$.

The union of the arcs in $\mathcal{A}^i$ induces a connected
sector of $\mathcal{C}$, for each $i=1,\ldots,4$. Moreover, by our
assumptions, $\mathcal{A} \setminus (\mathcal{A}^1 \cup
\mathcal{A}^3)$ induces more than one connected sector on
$\mathcal{C}$, such that arcs in $\mathcal{A}^2$ and in
$\mathcal{A}^4$ are in different connected sectors $A_2$ and
$A_4$. These sectors can be represented as two disjoint arcs of
$\mathcal{C}$ with endpoints $t_2, h_2$, and  $t_4, h_4$
(respectively) in clockwise order (the arc corresponding to $A_i$
is obtained by taking the union of all arcs in belonging to the
same sector as $\mathcal{A}^i$, for $i=2,4$). Define $A_1$ and
$A_3$ analogously, and let us represent them by two disjoint arcs
of $\mathcal{C}$ with $t_1, h_1$, and  $t_3, h_3$ being their
endpoints, respectively. Notice that since the graph is not
chordal, and thus $\mathcal{A}$ covers the circle, either $A_1$
covers $(h_2-\varepsilon,t_4+\varepsilon)$ and  $A_3$ covers
$(h_4-\varepsilon,t_2+\varepsilon)$, or $A_1$ covers
$(t_2-\varepsilon,h_4+\varepsilon)$ and  $A_3$ covers
$(t_4-\varepsilon,h_2+\varepsilon)$, for some $\epsilon > 0$.
Without loss of generality, we may assume that the first case
holds.

Let $p_1$ be a point in the clockwise open arc $(h_2,t_4)$, $p_2$
in $(h_1,t_3)$, $p_3$ in $(h_4,t_2)$, and $p_4$ in $(h_3,t_1)$. No
arc of $\mathcal{A}^1, \mathcal{A}^2, \mathcal{A}^3$ or
$\mathcal{A}^4$ contains two of these points, since these arcs are
contained in $A_1,A_2,A_3$ and $A_4$, respectively. Furthermore,
no arc in  $\mathcal{A}\setminus \bigcup_{i=1}^{4}\mathcal{A}^i$
contains two of these points, because $A_1,A_2,A_3$ and $A_4$ are
connected sectors of either $\mathcal{A} \setminus (\mathcal{A}^1
\cup \mathcal{A}^3)$ or $\mathcal{A} \setminus (\mathcal{A}^2 \cup
\mathcal{A}^4)$.

\

At this point, we know $(iii) \Rightarrow (iii')$.

\

$(i)\Rightarrow (ii)$: This follows directly from Lemmas \ref{t:B1-NHCA} and \ref{t:powers}.\

\

$(ii) \Rightarrow (iii)$: Let $G$ be a minimal counterexample to $(iii)$, and let $(\mathcal{A}, \mathcal{C})$ be an arbitrary NHCA model of $G$. Then for every choice of four points of
$\mathcal{C}$, there is an arc of $\mathcal{A}$ that contains two of these points.

As usual, we will assume that the arcs in $\mathcal{A}$ are open and their endpoints are pairwise distinct. We can number the endpoints clockwise in the circle from $1$ to $2n$ ($n$ being the number of vertices of $G$). For an arc $\mathcal{A}_i \in \mathcal{A}$, its endpoints will be referred to as \emph{tail} and \emph{head} in such a way that $\mathcal{A}_i$ is the open arc traversing $\mathcal{C}$ clockwise from the tail to the head.

\begin{claim}
\label{cl:proper}
No arc of $\mathcal{A}$ is properly contained in another.
\end{claim}

Suppose there is an arc $\mathcal{A}_{i}$ which is properly
contained in an arc $\mathcal{A}_{j}$, $i\neq j$. By minimality,
and since $(iii) \Rightarrow (iii')$, the model $(\mathcal{A}
\setminus \{\mathcal{A}_{i}\}, \mathcal{C})$ admits four points
$p_1, p_2,p_3,p_4$ such that no arc contains two of them. But if
$\mathcal{A}_{j}$ does not contain two of these points, neither
does $\mathcal{A}_{i}$ that is properly contained in
$\mathcal{A}_{j}$. Thus, $(\mathcal{A}, \mathcal{C})$ satisfies
the property as well, a contradiction to our hypothesis.
$\diamondsuit$

\

\begin{claim}
\label{cl:no-dom}
No vertex is dominated by another.
\end{claim}

Suppose that vertex $v$ dominates vertex $w$. Let $\mathcal{A}_{v}$ and $\mathcal{A}_{w}$ be their corresponding arcs in the NHCA model $(\mathcal{A}, \mathcal{C})$. If there is an arc $\mathcal{A}_{z}$, corresponding to a vertex $z$, that intersects $\mathcal{A}_{w}$ only on $\mathcal{A}_{w} \setminus \mathcal{A}_{v}$, then, since $z$ is also adjacent to $v$, $\mathcal{A}_{z}$ intersects $\mathcal{A}_{v}$ only in $\mathcal{A}_{v} \setminus \mathcal{A}_{w}$. But then $\mathcal{A}_{v}$, $\mathcal{A}_{w}$ and $\mathcal{A}_{z}$ cover $\mathcal{C}$, a contradiction. Thus, such an arc does not exist. But then we can replace $\mathcal{A}_{w}$ by $\mathcal{A}_{w} \cap \mathcal{A}_{v}$ obtaining a NHCA model of the same graph with an arc properly contained in another, a contradiction with the previous claim.
$\diamondsuit$

\

It was shown by Golumbic and Hammer~\cite{G-H-circ-arc} that the last claim implies that, when traversing the endpoints of the arcs on the circle clockwise, heads and tails necessarily alternate. Moreover, they proved that $G$ is the $k$-th power of the cycle $C_n$, for some value of $k$. Since $G$ is a counterexample to $(iii)$, it follows that $k \geq 2$.

Let $1, \dots, 2n$ denote the endpoints of the arcs, where odd numbers correspond to tails and even numbers to heads. Thus, every arc is of the form $(2i-1,2i+2k)$, for $i=1,\dots,n$, where the sums are taken modulo $2n$. In particular, every arc properly contains $2k$ of the $2n$ endpoints. Since the model is normal and Helly, $6k < 2n$, otherwise arcs $(1,2k+2)$, $(2k+1,4k+2)$, and $(4k+1,6k+2)$ cover the circle. On the other hand, $8k > 2n$, otherwise points $2$, $2k+2$, $4k+2$, and $6k+2$ would be such that no arc of $\mathcal{A}$ contains two of them.

We will show now that $6k < 2n < 8k$ is also a sufficient
condition for the $k$-th power of the cycle $C_n$ to be a
counterexample to $(iii)$ (not necessarily minimal). It is clear
that $6k < 2n$ ensures that $C^k_n$ is a NHCA graph. Consider a
NHCA model of $C_n^k$. Now, suppose $2n < 8k$ and let  $p_1,
p_2,p_3,p_4$ be four points of $\mathcal{C}$. We may assume that
they correspond to endpoints of arcs, otherwise we can move each
of them to its closest endpoint without creating a new containment
relation between arcs and points. If there are two points at
distance at most $2k-2$, i.e., in the closed interval $[i,
i+2k-2]$ for some $i=1,\ldots,n$, then they are both contained
either in the arc $(i-1,i+2k)$ or in the arc $(i-2,i+2k-1)$,
depending on the parity of $i$. So we may assume now that
$p_1,p_2,p_3,p_4$ are pairwise at distance at least $2k-1$. It
follows from the inequality $2n < 8k$ that at least two pairs of
vertices are at distance exactly $2k-1$ on $\mathcal{C}$, and at
least one of these pairs corresponds to endpoints $i, i+2k-1$ with
$i$ even. Thus, these two points are both contained in the arc
$(i-1,i+2k)$.


Since $k \geq 2$, the inequality $6k < 2n$ implies that $n \geq 7$
thus, by the property above, $C_7^2$ is a minimal counterexample
to $(iii)$. Indeed, $C_n^k$, $k \geq 2$, contains $C_7^2$ as
induced subgraph if and only if $12k < 4n \leq 14k$ (it can be
verified that the arcs $(1,2k+2)$, $(2k+1,4k+2)$, $(4k+1,6k+2)$,
$\dots$, $(12k+1,14k+2)$, where the operations are done modulo
$2n$, induce $C_7^2$).

More in general and inductively, we can prove that  $C_{4t-1}^t$,
with $t \geq 2$, is a minimal NHCA counterexample to $(iii)$ and
that $C_n^k$, $k \geq 2$, contains $C_{4t-1}^t$ as induced
subgraph if and only if $2(4t-5)k < 2(t-1)n$ and $2tn \leq
2(4t-1)k$, or equivalently, $(4t-5)/(t-1) < n/k \leq (4t-1)/t$.

As $(4t-1)/t$ converges to $4$ as $t$ tends to infinity, every
$C_n^k$ with $k \geq 2$ and such that $3 < n/k < 4$ contains a
power of a cycle $C_{4t-1}^t$ as induced subgraph, for some $t
\geq 2$, and this completes the proof.


\

$(iii)\Rightarrow (i)$: Let $(\mathcal{A}, \mathcal{C})$ be a circular-arc model of $G$ such that there exist four points on $\mathcal{C}$ satisfying that no arc of  $\mathcal{A}$ contains
two of these points.

We will place the corners of the rectangle in those four points. Since the arcs in $\mathcal{A}$ are open and we are assuming, without loss of generality, that the endpoints of the arcs are pairwise distinct, we may assume as well that the four points are different from all the arc endpoints. It is easy to see then that we can find a big enough rectangle in the grid such that we can represent the arcs as paths in the grid, maintaining the order of their endpoints, and placing the four corners at the desired points, and this will give us a $B_1$-EPR representation of $G$.
\end{proof}

Now we are able to prove the following.

\begin{thm}
\label{t:B1-NHCA-c7c}
Let $G=(V,E)$ be a graph. Then $G\in B_1$-EPR if and only if $G\in$ NHCA and $G$ has no $C_{4k-1}^k$, $k \geq 2$, as induced subgraph.
\end{thm}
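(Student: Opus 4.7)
The plan is to reduce both directions of the equivalence to the already-proved Theorem~\ref{t:B1-epr-char}, which handles the non-chordal case; only the chordal case requires new arguments.

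For the forward direction, I would first note that $B_1$-EPR $\subseteq B_1$-EPG (a path on a rectangle is a special path on the grid) and that $B_1$-EPR is clearly closed under induced subgraphs. Hence, if $G \in B_1$-EPR, Lemma~\ref{t:B1-NHCA} gives $G \in$ NHCA and Lemma~\ref{t:powers} excludes every $C_{4k-1}^k$ with $k \geq 2$ as an induced subgraph.

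For the reverse direction, assume $G \in$ NHCA contains no induced $C_{4k-1}^k$ with $k \geq 2$. If $G$ is not chordal, Theorem~\ref{t:B1-epr-char}, implication (ii)$\Rightarrow$(i), is enough. Otherwise $G$ is chordal, and I would aim to verify condition (iii) of Theorem~\ref{t:B1-epr-char} on an arbitrary NHCA model $(\mathcal{A}, \mathcal{C})$ of $G$. The implication (iii)$\Rightarrow$(i) of Theorem~\ref{t:B1-epr-char}, whose proof only performs a geometric embedding of the circle into a rectangle and never uses the non-chordality hypothesis, then produces the desired $B_1$-EPR representation. To verify (iii) it suffices to show that $\mathcal{A}$ does not cover $\mathcal{C}$, since any free sub-arc of $\mathcal{C}$ contains four points that no arc of $\mathcal{A}$ meets.

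The key step is thus to prove that $\mathcal{A}$ cannot cover $\mathcal{C}$ when $G$ is chordal. Suppose it does and take a minimum subcover $\mathcal{A}' = \{A_1, \dots, A_m\} \subseteq \mathcal{A}$; by the NHCA property, $m \geq 4$. Ordering the arcs cyclically by their clockwise starting endpoints, consecutive arcs must overlap (else the cover would have a gap). If some pair of non-consecutive arcs $A_i, A_j$ overlapped, then each arc $A_\ell$ strictly between them on the shorter cyclic side would have both endpoints inside $A_i \cup A_j$ and so be redundant, contradicting the minimality of $\mathcal{A}'$. Therefore the intersection graph on $\{A_1, \dots, A_m\}$ is exactly the chordless cycle $C_m$ with $m \geq 4$, providing an induced $C_m$ in $G$ and contradicting chordality. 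This minimum-cover argument is the only real technicality; the remainder is routine bookkeeping using the earlier lemmas and Theorem~\ref{t:B1-epr-char}.
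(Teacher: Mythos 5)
Your proposal is correct, and the forward direction plus the reduction of the non-chordal case to Theorem~\ref{t:B1-epr-char} coincide exactly with what the paper does. Where you genuinely diverge is the chordal case: the paper simply cites the result of Cao, Grippo and Safe that a chordal NHCA graph is an interval graph, hence a $B_0$-EPR (and so $B_1$-EPR) graph, and stops there. You instead give a self-contained argument: a minimum subcover of the circle in a NHCA model has at least four arcs (by normality and the Helly property), and its intersection graph is a chordless $C_m$ with $m\geq 4$, so chordality forces an uncovered point of the circle; you then feed the resulting four free points into condition $(iii)$ and invoke $(iii)\Rightarrow(i)$ of Theorem~\ref{t:B1-epr-char}, correctly observing that that implication is a purely geometric embedding and never uses non-chordality (a point the paper's statement of Theorem~\ref{t:B1-epr-char}, which assumes $G$ non-chordal, would otherwise make illegitimate to use). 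Your minimal-cover step does need the standard bookkeeping that a minimal cover contains no arc inside another, so that consecutive arcs in the tail order must overlap and non-consecutive ones must be disjoint, but this is classical and your sketch of it is sound. What your route buys is independence from the external citation and an explicit construction; what the paper's route buys is brevity, and arguably a cleaner statement (an uncovered point immediately yields an interval, i.e.\ $B_0$-EPR, model without detouring through the four-point condition). Both proofs are valid.
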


\begin{proof}
One implication follows immediately from Lemmas \ref{t:B1-NHCA}
and \ref{t:powers}, since the class is hereditary. For the
converse, if $G$ is not a chordal graph, then the result follows
from Theorem \ref{t:B1-epr-char}. Suppose now that $G$ is chordal.
It is shown in~\cite{C-G-S-nhca} that a chordal NHCA graph is
indeed an interval graph, thus a $B_0$-EPR graph and, in
particular, a $B_1$-EPR graph.
\end{proof}

Since $C_{4k-1}^k$, $k \geq 2$, is not in $B_1$-EPG (see Lemma~\ref{t:powers}), it follows that $B_1$-EPG $\cap$ NHCA = $B_1$-EPR = NHCA $\cap$ $\{C_{4k-1}^k\}_{k \geq 2}$-free.

It is easy to develop a polynomial-time algorithm to recognize $B_1$-EPR graphs, based on the polynomial-time recognition algorithm of NHCA graphs~\cite{C-G-S-nhca}, that outputs a NHCA model of the graph if there is one, and property $(iii')$ of Theorem \ref{t:B1-epr-char}, since the four points can be chosen from the endpoints of the arcs of the model.

We leave as an open problem the characterization of CA $\cap$
$B_2$-EPR and CA $\cap$ $B_3$-EPR by minimal forbidden induced
subgraphs.

\section{Further results}

The thick spider $S_3$ is one of the minimal forbidden induced
subgraphs for the class NHCA~\cite{C-G-S-nhca}, but all the thick
spiders are CA graphs and, by Theorem~\ref{t:CA-B3-epg}, $B_3$-EPG
graphs. Thick spiders allow us to distinguish classes in the
families $B_k$-EPR ($k \leq 4$), $B_k$-EPG ($k \leq 3$), NHCA and
NCA. In fact, $S_6$ is not NCA but it is in $B_2$-EPR (see
Figure~\ref{fig:spiders} and Proposition~\ref{p:s6}); $S_7$ is in
$B_3$-EPR $\setminus$ $B_2$-EPR (Proposition~\ref{p:s7r});
$S_{40}$ is in $B_3$-EPG $\setminus$ $B_2$-EPG
(Proposition~\ref{p:s40}); $S_{7}$ is in $B_2$-EPG $\setminus$
$B_1$-EPG (Proposition~\ref{p:s7g}).
\\

\begin{figure}
\begin{center}
\includegraphics[width=\textwidth]{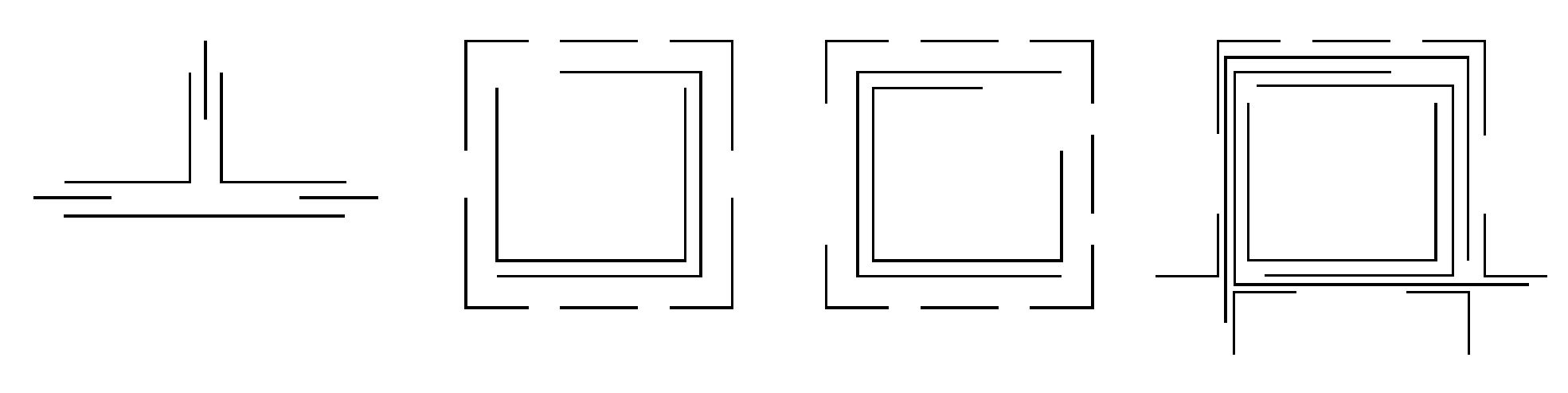}
\caption{From left to right: a $B_1$-EPG representation of $S_3$;
a sketch of a $B_2$-EPR representation of $S_6$ (we draw all the
vertices of the stable set and representative vertices of the
clique; other vertices in the clique can be represented
symmetrically); sketches of $B_3$-EPR and $B_2$-EPG
representations of $S_7$ (again, we draw all the vertices of the
stable set and representative vertices of the clique; other
vertices in the clique can be represented
symmetrically).}\label{fig:spiders}
\end{center}
\end{figure}

We complete the proof of these claims in the following
propositions.

\begin{prop}\label{p:s7r}
The thick spider $S_{7}$ is not a $B_2$-EPR graph.
\end{prop}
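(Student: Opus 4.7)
By contradiction, assume $S_7$ admits a $B_2$-EPR representation in a rectangle $\mathcal R$ with sides $N, E, S, W$. Each path lies on $\mathcal R$'s boundary cycle; a bend occurs only at a corner and uses both its incident edges, so a path with at most $2$ bends uses at most $3$ of the $4$ sides and meets each side in a single connected sub-segment.

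The plan begins with two structural observations. First, the $S$-paths are pairwise edge-disjoint, so at most one of them can bend at each of the $4$ corners of $\mathcal R$; hence the total number of bends among $S$-paths is at most $4$, and at least $7 - 4 = 3$ of them are $0$-bend intervals (each contained in a single side). Second, for any $c_k \in K$ and any $j \neq k$, $\mathcal P_{c_k}$ must share an edge with $\mathcal P_{s_j}$; when $s_j$ is an interval on side $X$, this forces $\mathcal P_{c_k}$ to use some edge of $X$, and its intersection with $X$ is a single connected sub-segment.

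The argument then splits on the distribution of intervals in $S$. In Case~A some side contains at least three intervals $s_{i_1}, s_{i_2}, s_{i_3}$ in cyclic order; then $\mathcal P_{c_{i_2}}$ has a connected segment on that side containing both $\mathcal P_{s_{i_1}}$ and $\mathcal P_{s_{i_3}}$, hence also $\mathcal P_{s_{i_2}}$, contradicting $c_{i_2}\not\sim s_{i_2}$ (this is the argument used for $S_{13}$ in $B_3$-EPR, applied here with a tighter bend budget). In Case~B each side carries at most $2$ intervals, and the sides carrying intervals cannot be all $4$ (some $c_k$ would need all $4$ sides); nor can they be exactly $3$, since any $3$ of the $4$ sides are $3$ consecutive sides $X, Y, Z$ of $\mathcal R$, and the only $B_2$ configuration for $c_k$ reaching intervals on all three is to use $X, Y, Z$ with bends at both corners of $Y$ — but then the $Y$-segment of $\mathcal P_{c_k}$ is all of $Y$ and therefore contains any interval on $Y$, a contradiction for the middle-side interval; and they cannot be exactly $2$ either. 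In this last situation, for $2$ adjacent interval sides sharing corner $Z$, every $c_k$ whose index lies outside those two intervals must bend at $Z$ (the only $B_2$ way to use both adjacent sides), which either collides with a bent $S$-path at $Z$ via $c_r\not\sim s_r$, or else, when $Z$ is free, forces such a tight placement of the remaining bent $S$-paths (under $a_1 + 2a_2 \leq 4$) that some specific $c_k$ cannot reach a bent $S$-path at a far corner; the case of $2$ opposite interval sides is handled analogously by a positional analysis of $\mathcal P_{c_k}$ for one of the intervals.

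The main obstacle is precisely this last sub-case of Case~B — intervals on exactly $2$ sides — which does not yield to a single clean invariant and requires an orderly enumeration of where the remaining bent $S$-paths may sit under the bend-budget and edge-disjointness constraints, exhibiting in each arrangement a specific $c_k$ with no valid $B_2$-EPR configuration.
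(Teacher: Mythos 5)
Your proposal has two genuine gaps. The first is local: in the ``exactly three sides'' sub-case of Case~B, the contradiction you derive only applies when the middle side $Y$ carries two intervals, one of which is $s_k$; if $Y$ carries a single interval $s_j$ (which is entirely consistent with your hypotheses --- e.g.\ three intervals, one per side), then $\mathcal{P}_{c_j}$ is not forced to touch $Y$ at all and can route along $Z$, the fourth side $W$, and $X$ with exactly two bends, so no contradiction arises; and for any other index $k$ the forced configuration ``$\mathcal{P}_{c_k}$ covers all of $Y$'' is perfectly legal, since $c_k$ is adjacent to every interval on $Y$. The second gap is fatal as written: the ``exactly two sides'' sub-case, which you yourself identify as the main obstacle, is not proved --- you describe the shape of an enumeration that would have to be carried out, but you do not carry it out, so the argument is incomplete precisely where it is hardest.

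The missing idea that makes the paper's proof close quickly is a stronger version of your Case~A: if three $S$-paths (not necessarily $0$-bend intervals) have their union contained in the union of \emph{two adjacent sides} of the rectangle, occurring in order $\mathcal{P}_i,\mathcal{P}_j,\mathcal{P}_k$ along those sides, then $\mathcal{P}_{c_j}$ must reach $\mathcal{P}_i$ and $\mathcal{P}_k$ while avoiding $\mathcal{P}_j$, hence must go the long way around through the other two sides, crossing three corners and using at least three bends. With this lemma the case analysis is driven not by how many sides carry intervals but by how the four corners are covered by $S$-paths: two opposite corners must be covered by distinct $S$-paths (otherwise one half of the boundary, i.e.\ two adjacent sides, contains at least three $S$-paths), so all four corners are covered by at most four $S$-paths, and a short pigeonhole count on the remaining paths --- which, covering no corner, each lie in a single side --- always produces three $S$-paths inside two adjacent sides. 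I suggest replacing your side-counting dichotomy with this adjacent-sides lemma; it subsumes your Case~A and eliminates both the three-side hole and the unfinished two-side enumeration.
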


\begin{proof}
By contradiction, suppose that $S_7$ admits a $B_2$-EPR
representation. Let us consider only the paths corresponding to
vertices in the stable set. If there are three of them whose union
is contained in two adjacent sides of the rectangle (or in one
side of the rectangle), let us say in order $\mathcal{P}_i$,
$\mathcal{P}_j$, $\mathcal{P}_k$, representing vertices $s_i$,
$s_j$, $s_k$, respectively, then the path corresponding to vertex
$c_j$ in the clique has to intersect $\mathcal{P}_i$ and
$\mathcal{P}_k$ avoiding $\mathcal{P}_j$, and so it needs at least
three bends, a contradiction. So, we may assume that this
situation does not occur. Consider two opposite corners of the
rectangle. If they are not covered by two different paths, then
there are at least three paths whose union is contained in two
adjacent sides of the rectangle, a contradiction. So, we may
assume that the four corners are covered by paths, corresponding
to (at most) four vertices of the stable set. If no such path uses
two corners, then, from the remaining three paths, there are two
of them that are intervals contained in the same side or in
adjacent sides of the rectangle. But then, their union together
with one path on a corner are contained in two adjacent sides of
the rectangle, a contradiction. If exactly one of the paths uses
two corners (and one side of the rectangle), from the remaining
four paths, there are two of them that are intervals contained in
the same side of the rectangle, so as before we obtain a
contradiction. Finally, if there are two paths using two corners
each, and two sides of the rectangle, from the remaining five
paths, there are three of them that are intervals contained in the
same side of the rectangle, a contradiction. This completes the
proof.
\end{proof}

\begin{prop}\label{p:s7g}
The thick spider $S_{7}$ is not a $B_1$-EPG graph.
\end{prop}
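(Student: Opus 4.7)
The plan is to argue by contradiction. Assume $S_7$ admits a $B_1$-EPG representation and denote by $\mathcal{P}_{c_i}$ and $\mathcal{P}_{s_j}$ the paths corresponding to the clique and stable-set vertices. Since every path has at most one bend, each path uses edges on at most two grid lines: a row and a column meeting at its bend, or a single row or column if it has no bend.

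The first step is to focus on $\mathcal{P}_{c_1}$, which must share an edge with each of the six paths $\mathcal{P}_{s_2},\ldots,\mathcal{P}_{s_7}$. Because $\mathcal{P}_{c_1}$ uses edges on at most two grid lines, the pigeonhole principle guarantees that at least three of these six stable paths share an edge with $\mathcal{P}_{c_1}$ on a common grid line, which without loss of generality I take to be a row $x$. Since the stable-set paths are pairwise edge-disjoint, their subpaths on $x$ are disjoint and linearly ordered along $x$; I would relabel so that $\mathcal{P}_{s_i},\mathcal{P}_{s_j},\mathcal{P}_{s_k}$ appear in this left-to-right order.

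The core of the argument is then to rule out the existence of $\mathcal{P}_{c_j}$. Since $c_j$ is adjacent to $s_i$ and $s_k$ but not to $s_j$, the path $\mathcal{P}_{c_j}$ must share an edge with both $\mathcal{P}_{s_i}$ and $\mathcal{P}_{s_k}$ while remaining edge-disjoint from $\mathcal{P}_{s_j}$. I would proceed by enumerating the possible shapes of the one-bend path $\mathcal{P}_{c_j}$: (i) if $\mathcal{P}_{c_j}$ uses row $x$ as a plain segment, any subpath on $x$ that reaches both $\mathcal{P}_{s_i}$ and $\mathcal{P}_{s_k}$ necessarily covers the row-$x$ subpath of $\mathcal{P}_{s_j}$ between them, a contradiction; (ii) if $\mathcal{P}_{c_j}$ avoids row $x$ entirely, then any common edge with $\mathcal{P}_{s_i}$ or $\mathcal{P}_{s_k}$ must lie on a column that is the bend-column of that stable path (since each $\mathcal{P}_{s_t}$ uses only row $x$ and, when it bends, a single column located at an endpoint of its row-$x$ subpath), which would force the bend-columns of $\mathcal{P}_{s_i}$ and $\mathcal{P}_{s_k}$ to coincide, impossible because they lie on opposite sides of $\mathcal{P}_{s_j}$; (iii) if $\mathcal{P}_{c_j}$ is L-shaped with bend at a point $(x,y)$, a case split according to whether $y$ sits left of, right of, or within $\mathcal{P}_{s_j}$'s row-$x$ subpath yields, in every subcase, either a forced overlap of row-$x$ edges with $\mathcal{P}_{s_j}$ or a failure of the column through $y$ to reach the outer stable path on the far side of $\mathcal{P}_{s_j}$.

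I expect the most delicate point to be the subcase of (iii) in which the bend of $\mathcal{P}_{c_j}$ lands exactly on an endpoint of $\mathcal{P}_{s_j}$'s row-$x$ subpath. There the row-extension of $\mathcal{P}_{c_j}$ can legitimately avoid $\mathcal{P}_{s_j}$, so one must argue separately that the vertical arm of the L cannot reach the stable path sitting on the opposite side of $\mathcal{P}_{s_j}$: the bend-column of that far stable path lies at one of the two endpoints of its own row-$x$ subpath, and those endpoints are separated from $y$ by the full length of $\mathcal{P}_{s_j}$'s row-$x$ subpath, so the columns cannot coincide. Exhausting this and the analogous subcases closes the argument and yields $S_7 \notin B_1$-EPG.
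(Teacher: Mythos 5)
Your proposal is correct and follows essentially the same route as the paper's proof: apply the pigeonhole principle to a clique vertex's at most two grid lines to find three stable-set paths meeting it on a common row $x$, order their row-$x$ subpaths, and show that the clique vertex nonadjacent to the middle one cannot reach the two outer paths with a single bend while avoiding the middle interval. Your case analysis is more detailed than the paper's (which compresses cases (i)--(iii) into the observation that the outer two paths share no common column and have no edges off row $x$ except on one column each), but the underlying argument is the same.
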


\begin{proof}
By contradiction, suppose that $S_7$ admits a $B_1$-EPG
representation. Let us consider the path $\mathcal{P}_c$
corresponding to a vertex $c$ of the clique and the paths
corresponding to its $6$ neighbors in the stable set $S$. The path
$\mathcal{P}_c$ has edges on at most two lines (one row and one
column) of the grid. Thus, it intersects at least $3$ paths
corresponding to neighbors in $S$ on a same line $x$. Without loss
of generality, we may assume that $x$ corresponds to a row of the
grid. Consider now the $3$ intervals on row $x$ belonging to the 3
paths mentioned above. Since the corresponding vertices are
pairwise non adjacent, they admit some order on row $x$, say
$\mathcal{I}_i$, $\mathcal{I}_j$, $\mathcal{I}_k$, corresponding
to vertices $s_i, s_j,s_k$, respectively. Having at most one bend
each, the paths corresponding to $s_i$ and $s_k$ do not have edges
in a common column (and they do not have edges in a row different
from $x$, or more than one interval on $x$). So there is no way
for the path $\mathcal{P}_{c_j}$ corresponding to the vertex $c_j$
of the clique that is not adjacent to $s_j$ of avoiding the
interval $\mathcal{I}_j$ while intersecting the paths
corresponding to $s_i$ and $s_k$ using only one bend.
\end{proof}


\noindent \textbf{Acknowledgement.} We would like to thank Marty
Golumbic whose lectures about EPG graphs have motivated this
research, and to Jayme Szwarcfiter, who posed some of these
questions at LAGOS'09.


\begin{thebibliography}{10}


\bibitem{EPG-lagos15}
L.~Alc{\'o}n, F.~{Bonomo}, G.~Dur{\'a}n, M.~{Gutierrez},
P.~{Mazzoleni},
  B.~{Ries}, and M.~{Valencia-Pabon}.
\newblock On the bend number of circular-arc graphs as edge intersection graphs
  of paths on a grid.
\newblock {\em Electronic Notes in Discrete Mathematics}.
\newblock To appear.

\bibitem{Bernard-epg}
A.~{Asinowski} and B.~{Ries}.
\newblock Some properties of edge intersection graphs of single-bend paths on a
  grid.
\newblock {\em Discrete Mathematics}, 312:427--440, 2012.

\bibitem{Suk-epg}
A.~{Asinowski} and A.~{Suk}.
\newblock Edge intersection graphs of systems of paths on a grid with a bounded
  number of bends.
\newblock {\em Discrete Applied Mathematics}, 157:3174--3180, 2009.

\bibitem{Biedl-epg}
T.~{Biedl} and M.~{Stern}.
\newblock On edge intersection graphs of $k$-bend paths in grids.
\newblock {\em Discrete Mathematics \& Theoretical Computer Science},
  12(1):1--12, 2010.

\bibitem{Bondy-Murty}
J.A. {Bondy} and U.S.R. {Murty}.
\newblock {\em Graph Theory}.
\newblock Springer, New York, 2007.

\bibitem{C-G-S-nhca}
Y.~{Cao}, L.N. {Grippo}, and M.D. {Safe}.
\newblock Forbidden induced subgraphs of normal {H}elly circular-arc graphs:
  Characterization and detection.
\newblock arXiv:1405.0329v1 [cs.DM], May 2014.

\bibitem{D-G-S-survey}
G.~Dur{\'a}n, L.N. Grippo, and M.D. Safe.
\newblock Structural results on circular-arc graphs and circle graphs: a survey
  and the main open problems.
\newblock {\em Discrete Applied Mathematics}, 164:427--443, 2014.

\bibitem{Epstein-epg}
D.~{Epstein}, M.C. {Golumbic}, and G.~{Morgenststern}.
\newblock Approximation algorithms for {$B_{1}$}-{EPG} graphs.
\newblock {\em Lecture Notes in Computer Science}, 8037:328--340, 2013.

\bibitem{Hell-circ-arc}
M.~{Francis}, P.~{Hell}, and J.~{Stacho}.
\newblock Forbidden structure characterization of circular-arc graphs and a
  certifying recognition algorithm.
\newblock arXiv:1408.2639v1 [cs.DM], August 2014.

\bibitem{Gav-circ-arc}
F.~{Gavril}.
\newblock Algorithms on circular-arc graphs.
\newblock {\em Networks}, 4:357--369, 1974.

\bibitem{G-H-circ-arc}
M.~{Golumbic} and P.~{Hammer}.
\newblock Stability in circular-arc graphs.
\newblock {\em Journal of Algorithms}, 9:314--320, 1988.

\bibitem{Golumbic-epg}
M.C. {Golumbic}, M.~{Lipshteyn}, and M.~{Stern}.
\newblock Edge intersection graphs of single bend paths on a grid.
\newblock {\em Networks}, 54:130--138, 2009.

\bibitem{Knau-epg-npc}
D.~{Heldt}, K.~{Knauer}, and T.~{Ueckerdt}.
\newblock Edge-intersection graphs of grid paths: the bend-number.
\newblock {\em Discrete Applied Mathematics}, 167:144--162, 2014.

\bibitem{Knau-epg-planar}
D.~{Heldt}, K.~{Knauer}, and T.~{Ueckerdt}.
\newblock On the bend-number of planar and outerplanar graphs.
\newblock {\em Discrete Applied Mathematics}, 179:109--119, 2014.

\bibitem{L-S-S-nhca}
M.C. {Lin}, F.J. {Soulignac}, and J.L. {Szwracfiter}.
\newblock Normal {H}elly circular-arc graphs and its subclasses.
\newblock {\em Discrete Applied Mathematics}, 161:1037--1059, 2013.

\bibitem{L-S-circ-arc-survey}
M.C. {Lin} and J.L. {Szwarcfiter}.
\newblock Characterizations and recognition of circular-arc graphs and
  subclasses: {A} survey.
\newblock {\em Discrete Mathematics}, 309(18):5618--5635, 2009.

\bibitem{Tuc-char}
A.~{Tucker}.
\newblock Characterizing circular-arc graphs.
\newblock {\em Bulletin of the American Mathematical Society}, 76:1257--1260,
  1970.

\end{thebibliography}

\end{document}